  \newtheorem{theorem}{Theorem}[section]
  \newtheorem{lemma}[theorem]{Lemma}
  \newtheorem{proposition}[theorem]{Proposition}
  \newtheorem{claim}[theorem]{Claim}
  \newtheorem{corollary}[theorem]{Corollary}
  \theoremstyle{definition}
  \newtheorem*{remark}{Remark}
  \newtheorem{step}{Step}[theorem]
\newenvironment{claimproof}[1][Proof of Claim]{\noindent \underline{#1.} }{\hfill$\diamondsuit$}
  \numberwithin{equation}{section}
\newcommand{\bbC}{{\mathbb{C}}}
\newcommand{\bbD}{{\mathbb{D}}}
\newcommand{\bbR}{{\mathbb{R}}}
\newcommand{\bbT}{{\mathbb{T}}}
\newcommand{\bbZ}{{\mathbb{Z}}}
\newcommand{\frA}{{\mathfrak{A}}}
\newcommand{\frF}{{\mathfrak{F}}}
\newcommand{\Aut}{{\mathrm{Aut}}} 
\DeclareMathOperator{\supp}{{supp}}
\DeclareMathOperator{\tr}{{Tr}}
 \newcommand{\set}[1]{{\left\{#1\right\}}}
\title{The Schwartzman Group of an Affine Transformation}
\author[D.\ Damanik]{David Damanik}
\address{Department of Mathematics, Rice University, Houston, TX~77005, USA}
\email{damanik@rice.edu}
\author[\'{I}.\ Emilsd\'{o}ttir]{\'{I}ris Emilsd\'{o}ttir}
\address{Department of Mathematics, Rice University, Houston, TX~77005, USA}
\email{irist@rice.edu}
\author[J.\ Fillman]{Jake Fillman}
\address{Department of Mathematics, Texas State University, San Marcos, TX 78666, USA}
\email{fillman@txstate.edu}
\begin{document}

\begin{abstract}
We compute the Schwartzman group associated with an ergodic affine automorphism of a compact connected abelian group given by the composition of an automorphism of the group and a translation by an element in the path component of the identity. We show that the Schwartzman group can be characterized by evaluating the invariant characters of the automorphism at the group element by which one translates. As a byproduct, we show that the set of labels associated with the doubling map on the dyadic solenoid is trivial, which in turn allows us to show that any ergodic family of Jacobi matrices defined over the doubling map has connected almost-sure essential spectrum.
\end{abstract}

\maketitle



\hypersetup{
	linkcolor={black!30!blue},
	citecolor={red},
	urlcolor={black!30!blue}
}

\section{Introduction}
We are interested in the study of ergodic Schr\"odinger operators of the form
\begin{equation} \label{eq:Homegadef}
[H_\omega u] = u(n-1) + u(n+1) + f(T^n\omega) u(n), \quad n \in \bbZ, 
\end{equation}
where $\omega \in \Omega$, $\Omega$ denotes a compact metric space, $T:\Omega \to \Omega$ is a homeomorphism, and $f \in C(\Omega,\bbR)$. In this case, we call $(\Omega,T)$ a topological dynamical system. If $\mu$ denotes a $T$-ergodic Borel probability measure on $\Omega$, then there is a set $\Sigma = \Sigma_\mu$, called the \emph{almost-sure spectrum} of the family, such that $\sigma(H_\omega) = \Sigma$ for $\mu$-a.e.\ $\omega \in \Omega$; see, for example, \cite{Damanik2017ESOSurvey, ESO1} for background.

Naturally, since the operators $H_\omega$ are bounded and self-adjoint, $\Sigma$ is a compact subset of $\bbR$, so its complement is the union of at most countably many disjoint open intervals, called the \emph{gaps} of the spectrum. The spectrum itself can have many different topological structures: it may be connected, it may be totally disconnected, it may consist of a finite union of nondegenerate closed intervals, and so on. This naturally leads to interest in the structure of the complement $\bbR \setminus \Sigma$, that is, in the structure of the gaps. The \emph{gap-labelling theorem} gives an invaluable tool in the study of the gaps. 

Associated with the family $\{H_\omega\}_{\omega \in \Omega}$, there is a function $k$ called the integrated density of states (IDS), which computes the average proportion of eigenvalues of cutoffs of $H_\omega$ that lie below a given threshold; see \eqref{e.IDSv2} below for the definition of the IDS. The crucial feature of the IDS is that the spectrum is precisely the set of growth points of the function $k$. In particular, the IDS is constant on each connected component of $\bbR \setminus \Sigma$, and it assumes different constant values on different connected components. The value assumed by $k$ on a gap is called the \emph{label} of the gap. The gap-labelling theorem then asserts that there is a countable subgroup of $\bbR$ that only depends on $(\Omega,T,\mu)$ such that all labels must belong to this group.

There are some different versions of gap-labelling. One version, due to Bellissard and coworkers \cite{Bel1986}, identifies a set of labels with a normalized trace on a suitable $C^*$ algebra. The version due to Johnson identifies a set of labels with the range of a particular homomorphism \cite{Johnson1986JDE}. This group will be denoted by $\frA(\Omega,T,\mu)$, called the Schwartzman group \cite{Schwarzmann1957Annals}, and defined precisely in Subsection~\ref{sec:schwartzman}. Given the relationship between $\frA(\Omega,T,\mu)$ and operators defined by the dynamical system $(\Omega,T)$ via \eqref{eq:Homegadef}, it is naturally of interest to compute $\frA(\Omega,T,\mu)$ for as many dynamical systems as possible. This was carried out in many standard examples in \cite{DFGap}.

Our main result computes Schwartzman groups associated with affine transformations of suitable groups. Recall that a topological group is a Hausdorff topological space  that is also a group for which the group operations (multiplication and inversion) are continuous. If $\Omega$ is a topological group, we write $\Aut(\Omega)$ for the set of continuous group automorphisms $A: \Omega \to \Omega$. If $A \in \Aut(\Omega)$ and $b \in \Omega$, we write $T_{A,b}:\omega \mapsto A\omega +b$ for the corresponding affine automorphism. We denote by $\widehat{\Omega}$ the group of continuous homomorphisms from $\Omega$ to $\bbT := \bbR/\bbZ$, which is called the Pontryagin dual group of $\Omega$. Naturally, any $A \in \Aut(\Omega)$ induces a dual map $\widehat{A}:\widehat{\Omega} \to \widehat{\Omega}$ via $\widehat{A}\chi = \chi\circ A$ for $\chi \in \widehat{\Omega}$. For additional background about topological groups and their duals, see textbook treatments in \cite{HewittRoss1979, Morris1977:TopGrps, Rudin1990Fourier}.

Let us now state our main result. Here and throughout the paper, $\pi$ denotes the canonical projection $\bbR \to \bbR/\bbZ$ sending $x \in \bbR$ to its equivalence class modulo $\bbZ$ in $\bbR/\bbZ$.

\begin{theorem}\label{t:main}
If $\Omega$ is a compact connected abelian group, ${A \in \Aut(\Omega)}$, $b \in \Omega$ belongs to the path component of the identity of $\Omega$, and $\mu$ is a $T_{A,b}$-ergodic probability measure on $\Omega$, then the Schwartzman group of $(\Omega,T_{A,b}, \mu)$ is given by
\begin{align} \label{eq:AforAffine}
\frA(\Omega,T_{A,b},\mu) 
& = \set{ \beta \in \bbR :  \beta \in \pi^{-1}(\chi b) \text{ for some } \chi \in \ker(\widehat{A}-I) } \\
\nonumber
& = \bigcup_{\chi \in \ker(\widehat A - I)} \pi^{-1}(\chi b).
\end{align}
\end{theorem}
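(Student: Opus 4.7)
The plan is to reduce the computation to characters via the canonical identification $[\Omega,\bbT]\cong\widehat\Omega$ (valid for any compact connected abelian $\Omega$) and then to handle each character through an explicit cocycle computation. Specifically, every continuous $F:\Omega\to\bbT$ is homotopic to a unique character $\chi_F\in\widehat\Omega$ and can therefore be written $F=\chi_F+\pi\circ h$ for some continuous $h:\Omega\to\bbR$. Since the coboundary $\pi\circ(h\circ T_{A,b}-h)$ integrates to zero against any $T_{A,b}$-invariant measure, for the purpose of computing Schwartzman invariants we may assume $F=\chi$ is a character. The central cocycle identity is then
\[
\chi\circ T_{A,b}-\chi=(\widehat A\chi-\chi)+\chi(b),
\]
where $\chi(b)\in\bbT$ is the constant element.

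For the inclusion $\supseteq$, suppose $\chi\in\ker(\widehat A-I)$, so that the cocycle above is the constant $\chi(b)\in\bbT$. Using the hypothesis that $b$ lies in the path component of the identity, choose a continuous path $\gamma:[0,1]\to\Omega$ with $\gamma(0)=0$ and $\gamma(1)=b$; then the continuous lift of $\chi\circ\gamma$ starting at $0$ terminates at some $\tilde c\in\pi^{-1}(\chi b)$, yielding the constant $\tilde c$ as a continuous real lift of the cocycle. Integrating against $\mu$ produces the Schwartzman invariant $\tilde c$, and adding integers $n\in\bbZ$ exhausts $\pi^{-1}(\chi b)$.

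For the reverse inclusion $\subseteq$, given any continuous $F$ with homotopy class $\chi_F$, the cocycle $F\circ T_{A,b}-F$ has homotopy class $(\widehat A-I)\chi_F$ in $[\Omega,\bbT]\cong\widehat\Omega$ (the constant $\chi_F(b)$ being homotopically trivial), so a continuous real lift $g$ with $\pi\circ g=F\circ T_{A,b}-F$ exists if and only if $\chi_F\in\ker(\widehat A-I)$. In that case every such $g$ has the form $\tilde c+(h\circ T_{A,b}-h)+n$ for some $\tilde c\in\pi^{-1}(\chi_F b)$ and $n\in\bbZ$ (using that $\bbZ$-valued continuous functions on the connected space $\Omega$ are constant), so $\int g\,d\mu=\tilde c+n\in\pi^{-1}(\chi_F b)$. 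The main obstacle I anticipate is aligning this cohomological picture with the paper's precise orbit-wise definition of $\frA$; Birkhoff's ergodic theorem, applied to the continuous real-valued $g$ above and using $T_{A,b}$-ergodicity of $\mu$, should confirm that the $\mu$-a.e.\ limit of the lifted orbit rotation numbers equals $\int g\,d\mu$, yielding the claimed membership in $\pi^{-1}(\chi_F b)$.
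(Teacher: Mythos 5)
Your overall strategy is viable and genuinely different from the paper's: instead of classifying homotopy classes on the suspension $X_b$ directly (which the paper does by first building a homeomorphism $X_b\cong X_0$ from a path joining $0$ to $b$, and then analyzing $C^\sharp(X_0,\bbT)$ fiber by fiber), you pass to a discrete-time cocycle picture over $\Omega$ itself, using Scheffer's theorem together with the identity $\chi\circ T_{A,b}-\chi=(\widehat A\chi-\chi)+\chi(b)$. The character-theoretic core of your argument is correct: a continuous real lift of $F\circ T_{A,b}-F$ exists iff $\chi_F\in\ker(\widehat A-I)$, and in that case every such lift is $\tilde c+(h\circ T_{A,b}-h)+n$ with $\tilde c\in\pi^{-1}(\chi_F b)$, whose integral lies in $\pi^{-1}(\chi_F b)$. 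The $\supseteq$ inclusion is also essentially fine once you write down the suspension map $[\omega,t]\mapsto\chi(\omega)+t\tilde c$ (which is well defined exactly because $\chi\in\ker(\widehat A-I)$ and $\tilde c\in\pi^{-1}(\chi b)$; this is the paper's $g_{\chi,\beta}$).

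The genuine gap is the dictionary you defer to the end: $\frA(\Omega,T_{A,b},\mu)$ is \emph{defined} as the range of $\frF_\nu$ on $C^\sharp(X_b,\bbT)$, and for the inclusion $\subseteq$ you must show that \emph{every} class in $C^\sharp(X_b,\bbT)$ is homotopic to one of the form $[\omega,t]\mapsto F(\omega)+t g(\omega)$, where $F=\Phi([\,\cdot\,,0])$ and $g(\omega)$ is the winding of $t\mapsto\Phi([\omega,t])$ over $[0,1]$; only then does your cocycle analysis apply to an arbitrary element of the Schwartzman group. Establishing this normal form --- in particular that $\omega\mapsto g(\omega)$ is a well-defined \emph{continuous} real lift of $F\circ T_{A,b}-F$ and that $\Phi$ minus the normal form lifts continuously to $\bbR$ on all of $X_b$ --- requires the parametrized path-lifting and uniqueness-of-lifts argument that constitutes the bulk of the paper's Lemma~\ref{Lemma1} (especially Step~5 of part~(b)). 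The Birkhoff step you mention only evaluates $\frF_\nu$ once this representation is in hand; it does not supply it. A related warning sign: as written, your argument never uses the hypothesis that $b$ lies in the path component of the identity (your $\supseteq$ step invokes the path $\gamma$ only to name one element of the nonempty set $\pi^{-1}(\chi b)$), whereas the authors state explicitly in Remark~(d) that their proof needs this hypothesis and that its necessity is open. Either your route, once the missing dictionary is proved, genuinely removes that hypothesis --- which would be a stronger theorem and should be checked with great care --- or the hypothesis is secretly needed precisely in the step you have not written. Until the identification of $C^\sharp(X_b,\bbT)$ with the cocycle data is proved, the $\subseteq$ inclusion is not established.
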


\begin{remark}
Let us make a few comments about the theorem.
\begin{enumerate}
\item[{\rm (a)}] Since the torus $\bbT^d$ is a compact connected abelian group (and every $b \in \bbT^d$ of course belongs to the path component of the identity), Theorem~\ref{t:main} is a generalization of \cite[Theorem~8.1]{DFGap}.
\smallskip

\item[{\rm (b)}] There are compact connected abelian groups that are not tori and are not path-connected. In particular, the assumption that $b$ lies in the path component of $0$ is not automatically satisfied, even though $\Omega$ is assumed to be connected.
\smallskip

\item[{\rm (c)}] The assumption of connectedness in Theorem~\ref{t:main} cannot be dropped entirely. We give a simple detailed example in the Appendix; see Corollary~\ref{coro:disconExample}. 
\smallskip

\item[{\rm (d)}] The proof we give requires the assumption that $b$ lies in the path component of the identity. We regard it as an interesting problem to determine whether the result needs this assumption.
\smallskip

\item[{\rm (e)}] If $A = I$, then $\ker(\widehat{A} - I) = \widehat{\Omega}$, the whole dual group of $\Omega$. In this case, \eqref{eq:AforAffine} equates the Schwartzman group of the translation $\omega \mapsto \omega+b$ with the frequency module of $(\Omega,b)$. Thus, one recovers a special case of the classical gap-labelling theorem for almost-periodic dynamical systems. In particular, in the case $A = I$, the result holds for all compact groups $\Omega$ and all $b \in \Omega$ without any connectedness assumptions; compare \cite{DFGap, DelSou1983CMP, JohnMos1982CMP}. 
\end{enumerate}
\end{remark}
The proof of Theorem~\ref{t:main} follows the general contours of the proof of \cite[Theorem~8.1]{DFGap}. However, it yields some surprisingly powerful conclusions. For instance, Theorem~\ref{t:main} allows us to answer several questions related to operators defined by the doubling map, and indeed the answers that we get would have been surprising in a vacuum. More specifically, \cite{DF22Doubling} shows that the almost-sure essential spectra of Schr\"odinger operators defined by the doubling map are connected, and did so by proving triviality of the range of the Schwartzman group restricted to maps on the solenoid factoring through the doubling map. However, we will show in Corollary~\ref{coro:2adicSchwartzGroup} that the result above implies triviality of the range of the Schwartzman homomorphism for arbitrary maps on the standard solenoid. At the time \cite{DF22Doubling} was written, the authors suspected that this would not be the case. Similarly,  \cite{DFZ} studied Jacobi matrices defined by the doubling map and proved connectedness of the almost-sure essential spectra for such operators under the assumption of nonvanishing off-diagonals. We will use Theorem~\ref{t:main} to show that the results of \cite{DFZ} indeed hold for arbitrary off-diagonals and hence hold in maximal generality; see Corollary~\ref{coro:JacobiDoubling}.

We will give some background in Section~\ref{sec:background}, prove Theorem~\ref{t:main} in Section~\ref{sec:computingA}, and discuss some interesting applications in Sections~\ref{sec:solenoid} and \ref{sec:Jacobi}.

\section*{Acknowledgements}
D.\ D.\ was supported in part by NSF grant DMS--2054752. J.\ F.\ was supported in part by Simons Foundation Collaboration Grant \#711663 and NSF grant DMS--2213196. The authors gratefully acknowledge support from the American Institute of Mathematics, ICERM, and the Simons Center for Geometry and Physics, at which portions of this work were done.

\section{Background} \label{sec:background}

\subsection{Topological Dynamics and the Schwartzman Group} \label{sec:schwartzman}
We define a \emph{topological dynamical system} to consist of a pair $(\Omega,T)$ in which $\Omega$ is a compact metric space and $T:\Omega \to \Omega$ is a homeomorphism; in particular, we consider invertible dynamics. A Borel probability measure on $\Omega$ is called $T$-\emph{invariant} if $\mu(T^{-1}B) = \mu(B)$ for all Borel sets $B$ and $T$-\emph{ergodic} if it is $T$-invariant and any $T$-invariant measurable function is a.e.\ constant.

Given an ergodic topological dynamical system $(\Omega,T,\mu)$, its \emph{suspension} is given by $X = \Omega \times \bbR /  \sim$, where $(T^n\omega,t) \sim (\omega,t+n)$ for $(\omega,t) \in \Omega \times \bbR$ and $n \in \bbZ$. This can be made into a continuous-time dynamical system via
\[\tau^s([\omega,t]) = [\omega,t+s],\]
where $[\omega,t]$ denotes the equivalence class of $(\omega,t)$ in $X$. Likewise, the ergodic measure $\mu$ induces a $\tau$-ergodic measure, $\nu$, on $X$ via
\begin{equation}
\int_X g \, d\nu = \int_\Omega \int_0^1 g([\omega,t]) \, dt \, d\mu(\omega).
\end{equation}

Given $g \in C(X,\bbT)$, one can lift the function $g_x: t \mapsto g(\tau^tx)$ to a map $\widetilde{g}_x:\bbR \to \bbR$. The limit
\[  \lim_{t\to\infty} \frac{\widetilde{g}_x(t)}{t} \]
exists for $\nu$-a.e.\ $x$, is $\nu$-a.e.\ constant, and only depends on the homotopy class of $g$. Denoting by $C^\sharp(X,\bbT)$ the set of homotopy classes of maps $X \to \bbT$, the induced map $\frF_\nu:C^\sharp (X,\bbT) \to \bbR$ is called the \emph{Schwartzman homomorphism} and its range is known as the \emph{Schwartzman group}, denoted
\begin{equation} \label{eq:schwartzmanDef}
\frA(\Omega,T,\mu) = \frF_\nu(C^\sharp(X,\bbT)).
\end{equation}
For later use (in the proof of Lemma~\ref{Lemma1}), it is helpful to note that two maps $X\to \bbT$ are homotopic if and only if their difference lifts to a map $X \to \bbR$. Thus,  $C^\sharp(X,\bbT)$ is equivalent to $C(X,\bbT)/H(X,\bbT)$ where $H(X,\bbT)$ denotes the subgroup of $C(X,\bbT)$ consisting of maps of the form $x\mapsto \pi(f(x))$ where $f:X \to \bbR$ is continuous and $\pi:\bbR \to \bbR / \bbZ$ is the standard projection; compare \cite[Proposition~4.10]{DFGap}.

\subsection{Gap Labels for Jacobi Matrices}
We also will discuss ergodic Jacobi matrices and the topological structure of their almost-sure spectrum.  To introduce these objects, let $(\Omega,T)$ be a topological dynamical system, that is, $\Omega$ is a compact metric space and $T : \Omega \to \Omega$ is a homeomorphism. Given $q \in C(\Omega,\bbR)$ and $p \in C(\Omega,\bbC)$, we consider the family of Jacobi matrices $\{J_\omega\}_{\omega \in \Omega}$, acting in $\ell^2(\bbZ)$, given by 
\begin{equation}\label{e.jacmatv2}
[J_\omega\psi](n) = \overline{p(T^{n-1}\omega)}\psi(n-1)+q(T^n\omega)\psi(n) + p(T^n\omega)\psi(n+1).
\end{equation}
The discrete Schr\"odinger operators in \eqref{eq:Homegadef} are a special case of ergodic Jacobi matrices with $q = f$ and $p \equiv 1$.

Fix a $T$-ergodic Borel probability measure $\mu$ on $\Omega$. We assume 
\begin{equation} \label{eq:fullsuppv2}
\supp\mu = \Omega,
\end{equation} 
where $\supp \mu$ denotes the topological support of $\mu$, that is, the smallest closed set having full $\mu$-measure. Let us mention that assumption \eqref{eq:fullsuppv2} is non-restrictive, since one can always replace the dynamical system $(\Omega,T)$ by $(\supp\mu,T|_{\supp\mu})$. Having chosen the measure $\mu$, we call $\{J_\omega\}$ an \emph{ergodic family of Jacobi matrices}. The \emph{density of states measure} (DOSM) is given by
\begin{equation}\label{e.DOSMv2}
\int f \, d\kappa 
= \lim_{N \to \infty} \int f\, d\kappa_{\omega,N}
:= \lim_{N\to\infty} \frac{1}{N} \tr(f(J_\omega\chi_{_{[0,N)}})), \quad \mu\text{-a.e.\ } \omega \in \Omega,
\end{equation}
and the \emph{integrated density of states} (IDS) is then given by
\begin{equation}\label{e.IDSv2}
k(E) = \int \! \chi_{_{(-\infty,E]}} \, d\kappa.
\end{equation}

We have the following gap-labelling result for ergodic families of Jacobi matrices. 

\begin{theorem}[{\cite[Theorem~1.1]{DFZ}}] \label{t:DFZmain}
Suppose $(\Omega,T)$ is an invertible topological dynamical system and $\mu$ is a fully supported $T$-ergodic Borel probability measure on $\Omega$. Given $p \in C(\Omega,\mathbb{C})$ and $q \in C(\Omega,\mathbb{R})$, let $\{J_\omega\}$ denote the associated ergodic family of Jacobi matrices, $\Sigma$ the almost-sure spectrum of this family, and $k$ its IDS. For all $E \in \bbR \setminus \Sigma$, we have
\begin{equation}
k(E) \in \frA(\Omega,T,\mu).
\end{equation}
\end{theorem}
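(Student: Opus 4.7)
The plan is to adapt the Johnson--Schwartzman approach to gap-labelling \cite{Johnson1986JDE} to Jacobi matrices by constructing, for each $E \notin \Sigma$, a continuous $\bbT$-valued function on the suspension $X$ whose Schwartzman image equals $k(E)$. The overall structure is threefold: first, build a candidate function $\phi_E \in C(X,\bbT)$ from Weyl--Titchmarsh data of the family; second, express $\frF_\nu([\phi_E])$ as a Birkhoff average of a one-step angle increment under $T$; third, identify that average with $k(E)$ via an oscillation-theoretic argument.

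For the first step, I would exploit the fact that for $E \in \bbR\setminus\Sigma$ the resolvent $R_\omega(E) := (J_\omega - E)^{-1}$ is almost surely bounded, depends norm-continuously on $\omega$ by strong continuity together with compactness, and has matrix elements that decay exponentially away from the diagonal via a Combes--Thomas estimate, uniformly in $\omega$. From this data one extracts a Weyl section: a continuous map $\omega \mapsto \ell^+(\omega)$ into the real projective line, encoding the direction of the $\ell^2$ solution at $+\infty$. Choosing a continuous branch yields $\phi_E \colon \Omega \to \bbT$, which descends to a function on $X$ (constant in the fibre direction after a suitable collar interpolation ensuring continuity across $t=0 \sim t=1$). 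A key technical point already flags the main obstacle to come: since $p$ may vanish, the usual transfer-matrix cocycle fails to be $\SL(2,\bbC)$-valued, so one should work with the resolvent directly, since $R_\omega(E)$ remains bounded and continuous even when $p$ vanishes somewhere along the orbit.

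The second step is essentially formal: since $\tau^1$ on $X$ conjugates the representative $[\omega,0]$ to $[T\omega,0]$, the Schwartzman value reduces, via Birkhoff's theorem and the definition in Subsection~\ref{sec:schwartzman}, to the $\mu$-average of the one-step lifted angle increment $\widetilde{\phi}_E(T\omega) - \widetilde{\phi}_E(\omega)$. The third step, which I expect to be the main obstacle, is to identify this average with $k(E)$. The natural tool is a Sturm-type oscillation theorem for Jacobi matrices relating the count of eigenvalues of the truncation $J_{\omega,[0,N)}$ below $E$ to the winding of a Pr\"ufer angle along orbits of length $N$; the $N\to\infty$ limit together with the definition \eqref{e.IDSv2} then yields $\frF_\nu([\phi_E]) = k(E)$ modulo a fixed normalization absorbed into the definition of $\phi_E$. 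The delicate issue is to carry out the oscillation count along orbits on which $p$ vanishes, since at such sites a Pr\"ufer angle can jump discontinuously and the classical counting argument must be modified so that these jumps contribute only a bounded defect that disappears under $N^{-1}$ averaging. Once this is established, $k(E) = \frF_\nu([\phi_E]) \in \frA(\Omega,T,\mu)$ follows directly from \eqref{eq:schwartzmanDef}.
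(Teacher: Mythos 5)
The paper does not prove this statement at all: it is imported verbatim from \cite[Theorem~1.1]{DFZ} and used as a black box, so there is no internal proof to compare against. Judged on its own terms, your outline follows the natural Johnson--Schwartzman strategy (build a circle-valued function on the suspension from spectral data at the energy $E$, then identify its Schwartzman number with $k(E)$ by oscillation theory), but the two steps you flag as ``delicate'' are genuine gaps rather than routine technicalities, and they are precisely the points where the hypothesis $p\in C(\Omega,\bbC)$ with no nonvanishing assumption bites.

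First, the construction of $\phi_E$. When $p$ is allowed to vanish, the difference equation $J_\omega u=Eu$ is no longer propagated by $\SL(2)$-valued transfer matrices and its solution space does not have the usual two-dimensional structure, so ``the direction of the $\ell^2$ solution at $+\infty$'' is not a well-defined continuous section of a real projective line bundle. For genuinely complex $p$ the natural target would be $\mathbb{CP}^1\cong S^2$, which is simply connected, so a section into it carries no winding; one must first gauge $p$ to $|p|$ and then still explain what the section is across sites where $|p|=0$. Saying ``work with the resolvent directly'' does not resolve this: $\omega\mapsto J_\omega$ is only strongly continuous (not norm continuous, since $T^n\omega$ and $T^n\omega'$ separate for large $|n|$), and the step that extracts a $\bbT$-valued function of nontrivial homotopy type from the resolvent is exactly the missing content. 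Second, and more seriously, your oscillation step asserts that the Pr\"ufer-angle jumps at decoupling sites ``contribute only a bounded defect that disappears under $N^{-1}$ averaging.'' The set $\{n: p(T^n\omega)=0\}$ can have positive density along $\mu$-a.e.\ orbit (for instance if $p$ vanishes on a set of positive $\mu$-measure), in which case the number of decoupling sites in $[0,N)$ grows linearly in $N$ and the accumulated defect does not vanish after dividing by $N$. Controlling exactly this degeneration is the main point of \cite{DFZ}; it cannot be dispatched by a boundedness remark, so as written the proposal does not establish the theorem in the stated generality.
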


\subsection{The Dyadic Solenoid} \label{subsec:solenoid}
In order to demonstrate that Theorem~\ref{t:main} contains new content, we show that it can be applied to solenoids. To that end, let us introduce three realizations of the dyadic solenoid, which we will denote by $\mathcal{S}_{1},\mathcal{S}_{2} \text{ and }\mathcal{S}_{3}$.
The first representation can be defined by 
\begin{equation} \mathcal{S}_{1} := (\mathbb{R} \times \mathbb{Z}_2)/A
\end{equation}
 where $\bbZ_2$ denotes the $2$-adic integers, and $A$ denotes the discrete, hence closed, subgroup $A= \{(a,-a) \text{ : } a\in \mathbb{Z}\} \subseteq \bbR \times \bbZ_2$. The doubling map on $\mathcal{S}_{1}$ is given by
 \begin{equation}
 T_1 [r,s] = [2r,2s].
 \end{equation} 
In the appendix of Chapter~1 in \cite{Robert2000padic}, it is shown that this is equivalent to the inverse limit 
\begin{equation}
\mathcal{S}_{2}:=\varprojlim  \mathbb{R}/2^n \mathbb{Z}
= \set{(x_n)_{n \geq 0} \in \prod_{n \geq 0} \bbR/2^n \bbZ : \sigma_n x_{n+1} = x_n \text{ for all } n}
\end{equation} 
of the projective system $(\mathbb{R} / 2^n \mathbb{Z}, \sigma_n)$, where $\sigma_n$ denotes the canonical projection $\sigma_n: x \text{ mod } 2^{n+1}\bbZ \mapsto x \text{ mod } 2^n\bbZ \text{, }n\geq 0$. The doubling map on $\mathcal{S}_2$, is defined by 
\begin{equation}
(T_2 x)_n = 2x_n.
\end{equation}

Finally, in Chapter~1 of \cite{BrinStuck2015}, the solenoid is realized as the attractor of an iterated function system on the solid torus. More precisely, let $\mathcal{I} = \bbT \times \overline{\bbD}$ denote the solid torus (where $\overline{\bbD} = \{(x,y) \in \bbR^2 : x^2+y^2 \leq 1\}$). Choosing some $\lambda \in (0,1/2)$, the transformation $F:\mathcal{I}\rightarrow \mathcal{I}$ is given by 
\begin{equation}
F(\omega,x,y)=  \left(2\omega,\lambda x+ \frac{1}{2}\cos(2\pi \omega),\lambda y + \frac{1}{2} \sin(2\pi \omega) \right).
\end{equation} The solenoid is then given by the attractor of this system, that is,
\begin{equation}
\mathcal{S}_{3}:=\bigcap_{n=0}^{\infty}F^n(\mathcal{I}).
\end{equation} 
The restriction of $F$ to $\mathcal{S}_3$ represents the doubling map, and is denoted by $T_3$.

\begin{proposition} \label{prop:allSolenoidsEquiv}
All three realizations of the doubling map on the dyadic solenoid are topologically conjugate to one another.
\end{proposition}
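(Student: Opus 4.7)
The plan is to exhibit explicit conjugacies $\Phi\colon \mathcal{S}_1 \to \mathcal{S}_2$ and $\Psi\colon \mathcal{S}_3 \to \mathcal{S}_2$ that intertwine the respective doubling maps; composing these yields the three-way conjugacy.

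For $\Phi$, define $\Phi([r,s])_n = \pi_n(r) + \iota_n(s) \in \bbR/2^n\bbZ$, where $\pi_n\colon \bbR \to \bbR/2^n\bbZ$ is the canonical projection and $\iota_n\colon \bbZ_2 \to \bbZ/2^n\bbZ \hookrightarrow \bbR/2^n\bbZ$ is reduction modulo $2^n$. I would verify in order: well-definedness on the quotient by $A$ (for $a \in \bbZ$, one has $(r+a)+\iota_n(s-a)\equiv r+\iota_n(s) \pmod{2^n}$); compatibility with the bonding maps $\sigma_n$ (since $\iota_{n+1}(s)\equiv \iota_n(s) \pmod{2^n}$); continuity (from continuity of each coordinate); bijectivity; and equivariance ($\Phi([2r,2s])_n = 2\Phi([r,s])_n$). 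For surjectivity, the key observation is that given $(x_n)\in \mathcal{S}_2$, the quantities $t_n := x_n - x_0 \in \bbR/2^n\bbZ$ actually lie in $\bbZ/2^n\bbZ$ (because the bonding compatibility forces $x_n \equiv x_0 \pmod 1$), and the coherent family $(t_n)$ defines an element $s\in \bbZ_2$ with $\Phi([x_0,s]) = (x_n)$. Injectivity follows by reading off $r-r'\in \bbZ$ from the $n=0$ component and then deducing $s-s'=r'-r$ in $\bbZ_2$.

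For $\Psi$, I would first rewrite $\mathcal{S}_2$ as $\varprojlim(\bbR/\bbZ, \times 2)$ via the natural bijection $(x_n)\mapsto (x_n/2^n)$, under which $T_2$ becomes doubling of each coordinate. Next, I would show that $F$ restricts to a homeomorphism on $\mathcal{S}_3$: because $\lambda < 1/2$, the two disks comprising $F(\mathcal{I})\cap(\{\omega\}\times\overline{\bbD})$ have radius $\lambda$ and are centered at antipodal points at distance $1$ apart, hence are disjoint, making $F$ injective on $\mathcal{I}$ (and therefore on $\mathcal{S}_3$); surjectivity onto $\mathcal{S}_3$ is the standard invariance $F(\mathcal{S}_3) = \mathcal{S}_3$ of the attractor. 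With $F^{-1}$ in hand, define $\Psi(p)_n := \omega(F^{-n}(p))\in \bbR/\bbZ$, where $\omega$ extracts the angular coordinate; since $F$ doubles angles, $2\Psi(p)_{n+1} = \Psi(p)_n$, placing $\Psi(p)$ in $\varprojlim(\bbR/\bbZ, \times 2)$, and a short computation shows $\Psi\circ T_3 = T_2\circ \Psi$.

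The main obstacle will be verifying that $\Psi$ is a bijection, which is where the geometry of the Smale--Williams attractor enters. For injectivity, if $\Psi(p) = \Psi(p')$, then both $p$ and $p'$ lie in $F^n(\{\Psi(p)_n\}\times\overline{\bbD})$, a disk of diameter $O(\lambda^n)$, so letting $n\to\infty$ forces $p=p'$. For surjectivity, given a coherent sequence $(y_n)$ the sets $F^n(\{y_n\}\times\overline{\bbD})$ form a nested decreasing family of closed disks (nesting follows because $F(\{y_{n+1}\}\times\overline{\bbD})\subseteq \{y_n\}\times\overline{\bbD}$ by the doubling of angles), so their intersection is a single point $p\in \mathcal{S}_3$ with $\Psi(p)=(y_n)$. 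Each of these verifications is a standard feature of the Smale--Williams construction, so the proof reduces to a careful assembly of familiar ingredients rather than the introduction of any new technique.
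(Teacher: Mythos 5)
Your proposal is correct and follows essentially the same route as the paper: the map $\Phi$ is exactly the paper's homomorphism $g(r,z)_n = r+\sum_{j=0}^{n-1}z_j2^j$ (reduction of $z$ modulo $2^n$), and $\Psi$ is the paper's conjugacy $h$, given by $(h(\omega,x,y))_n=\pi_n(T_3^{-n}(\omega,x,y))$, transported through the obvious rescaling identification of $\varprojlim \bbR/2^n\bbZ$ with $\varprojlim(\bbR/\bbZ,\times 2)$. The only difference is that you carry out in detail the verifications (injectivity of $F$ on $\mathcal{I}$ from $\lambda<1/2$, the nested-disk argument for bijectivity of $\Psi$, and the surjectivity/injectivity of $\Phi$) that the paper delegates to the references \cite{Robert2000padic} and \cite{BrinStuck2015}.
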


\begin{proof}
This statement is well known in the literature. For the reader's convenience, we sketch the main steps and provide references. It is mentioned in \cite{Robert2000padic} that the definitions of $\mathcal{S}_{1}\text{ and }\mathcal{S}_{2}$ are equivalent and in \cite{BrinStuck2015} the equivalence between $\mathcal{S}_{3}$ and $\mathcal{S}_{2}$ is discussed. Let us describe this in more detail. 

Letting $\pi_n:\bbT \times \overline{\bbD} \to \bbR/2^n\bbZ$ be the map $\pi_n(\theta,r,s)= 2^n\theta \ \mathrm{mod} \ 2^n\bbZ$, we define $h:\mathcal{S}_3 \to  \mathcal{S}_{2}\subseteq \Pi_{n \geq 0 } \bbR / 2^n\bbZ$ by 
\begin{equation} (h(\omega,x,y))_n=\pi_n (T_3^{-n}(\omega,x,y)),\end{equation}
which can be done because $T_3$ is invertible on $\mathcal{S}_{3}$.      
         One can then check that this $h$ is a homeomorphism that maps $\mathcal{S}_3$ to $\mathcal{S}_2$ and satisfies $T_2 \circ h = h\circ T_3$. See \cite[Section~1.9]{BrinStuck2015} for additional discussion.
         
         Next we discuss a conjugacy $\mathcal{S}_1 \to \mathcal{S}_2$. Writing $\bbZ_+ = \{0,1,2,\ldots\}$ and writing a typical element $z\in \bbZ_2$ as $\sum_{j \in \bbZ_+} z_j 2^j$, define $g:\bbR \times \bbZ_2 \to \mathcal{S}_2$ by 
\begin{equation} 
(g(r,z))_n =   r+\sum_{j=0}^{n-1}z_{j}2^j , \quad  n \in \bbZ_+.
\end{equation}
The reader can verify that this is a well-defined homomorphism with $\ker(g)=A$, which descends to a map $\bar g:\mathcal{S}_1 \to \mathcal{S}_2$ satisfying $T_2 \circ \bar  g = \bar  g \circ T_1$. See the appendix to \cite[Chapter~1]{Robert2000padic} for further details.
\end{proof}

Since all three realizations of the doubling map on the solenoid are topologically equivalent, they all have the same Schwartzman group, which can be seen from the following proposition.

If $(\Omega_1,T_1,\mu_1)$ and $(\Omega_2,T_2,\mu_2)$ are ergodic topological dynamical systems, we say that they are \emph{equivalent} if $(\Omega_1,T_1)$ and $(\Omega_2,T_2)$ are topologically conjugate via a homeomorphism $h:\Omega_1 \to \Omega_2$ such that $h_* \mu_1 = \mu_2$.

\begin{proposition} \label{prop:schwartzConj}
If $(\Omega_1,T_1,\mu_1)$ and $(\Omega_2,T_2,\mu_2)$ are equivalent ergodic topological dynamical systems, then
\begin{equation}
\frA(\Omega_1,T_1,\mu_1) =\frA(\Omega_2,T_2,\mu_2).
\end{equation}
\end{proposition}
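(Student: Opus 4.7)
The plan is to transfer the Schwartzman data through the conjugacy $h$ at the level of suspensions. Let $X_i = (\Omega_i \times \bbR)/\!\sim$ denote the suspension of $(\Omega_i, T_i)$, with flow $\tau_i^s$ and suspension measure $\nu_i$, for $i=1,2$.

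First I would lift $h$ to a map $H \colon X_1 \to X_2$ by the formula $H([\omega,t]) = [h(\omega),t]$. This is well-defined precisely because the conjugacy identity $h \circ T_1 = T_2 \circ h$ intertwines the two equivalence relations used to build the suspensions; using that $h$ is a homeomorphism, one checks that $H$ is itself a homeomorphism. By construction, $H$ intertwines the flows, $H \circ \tau_1^s = \tau_2^s \circ H$ for all $s \in \bbR$. Pushing forward the defining formula
\begin{equation*}
\int_{X_i} g\, d\nu_i = \int_{\Omega_i}\int_0^1 g([\omega,t])\, dt\, d\mu_i(\omega)
\end{equation*}
and using $h_*\mu_1 = \mu_2$, one obtains $H_* \nu_1 = \nu_2$.

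Next I would translate this equivalence of flows into equality of Schwartzman homomorphisms. Given $g \in C(X_2, \bbT)$, pull it back to $g \circ H \in C(X_1, \bbT)$; since $H$ is a homeomorphism, this induces a bijection on homotopy classes $C^\sharp(X_2, \bbT) \to C^\sharp(X_1, \bbT)$ (a homotopy upstairs pulls back to a homotopy downstairs, and the inverse $H^{-1}$ gives the inverse bijection). For $x \in X_1$, the function $t \mapsto (g \circ H)(\tau_1^t x) = g(\tau_2^t H(x))$ lifts to $\bbR$ via exactly the lift used for $g$ at the base point $H(x)$. Therefore
\begin{equation*}
\lim_{t\to\infty} \frac{\widetilde{(g\circ H)}_x(t)}{t} = \lim_{t\to\infty} \frac{\widetilde{g}_{H(x)}(t)}{t}
\end{equation*}
holds pointwise wherever either side is defined. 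By the definition of the Schwartzman homomorphism, the right-hand side equals $\frF_{\nu_2}([g])$ for $\nu_2$-a.e.\ $H(x)$, i.e., for $\nu_1$-a.e.\ $x$ since $H_* \nu_1 = \nu_2$; the left-hand side equals $\frF_{\nu_1}([g \circ H])$ for $\nu_1$-a.e.\ $x$. Hence $\frF_{\nu_1}([g \circ H]) = \frF_{\nu_2}([g])$.

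Combining the two ingredients, the bijection $[g] \mapsto [g \circ H]$ on homotopy classes intertwines the two Schwartzman homomorphisms, so they have the same range, which is exactly the desired equality of Schwartzman groups. The only slightly delicate point, which I would verify carefully, is that $H$ really is a homeomorphism, and that the identity relating lifts and rotation numbers survives on a set of full $\nu_1$-measure; both follow cleanly from $H$ being a topological conjugacy of the suspension flows with $H_*\nu_1 = \nu_2$.
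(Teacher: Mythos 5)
Your proposal is correct and follows essentially the same route as the paper's proof: the same induced homeomorphism $[\omega,t]\mapsto[h\omega,t]$ on suspensions, the same verification that it intertwines the flows and pushes $\nu_1$ to $\nu_2$, and the same identification of lifts along orbits to equate the two Schwartzman homomorphisms on corresponding homotopy classes. The measure-theoretic step you flag as delicate is handled in the paper exactly as you describe, by intersecting the two full-measure sets via $\overline{h}$.
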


\begin{proof}
Let $h:\Omega_1 \to \Omega_2$ be as in the definition of equivalence. Letting $(X_j,\tau_j,\nu_j)$ denote the corresponding suspensions, $h$ induces a map $\overline{h}: X_1 \to X_2$ via $[\omega_1,t] \mapsto [h\omega_1,t]$.  Since 
\[
\overline{h}([T_1^n\omega_1,t-n]) 
= [hT_1^n\omega_1,t-n] 
= [T_2^n h\omega_1,t-n] 
= [h\omega_1,t] 
= \overline{h}([\omega_1,t]),\]
we see that $\overline{h}$ is a well-defined homeomorphism. A similar calculation shows $\overline{h} \circ \tau_1^s = \tau_2^s \circ \overline{h}$ for all $s$. Finally, one has $\overline{h}_* \nu_1 = \nu_2$ on account of
\begin{align*} \int f \, d(\overline{h}_*\nu_1)
& = \int f\circ \overline{h} \, d \nu_1 
= \int_0^1 \int_{\Omega_1} f([h\omega_1,t]) \, d\mu_1(\omega_1)\, dt \\
& = \int_0^1 \int_{\Omega_2} f([\omega_2,t]) \, d\mu_2(\omega_2)\, dt = \int f \, d\nu_2,
\end{align*} 
where we used $h_*\mu_1 = \mu_2$ in the penultimate step. Thus, $\overline{h}$ gives a topological conjugacy from $(X_1,\tau_1,\nu_1)$ to $(X_2,\tau_2,\nu_2)$.
This in turn gives a map $P:C(X_2,\bbT) \to C(X_1,\bbT)$ by defining
\begin{equation}
Pf = f\circ \overline{h}, \quad f \in C(X_2,\bbT).
\end{equation}

Since $\overline{h}$ is a homeomorphism, $P$ is a bijection that maps homotopic functions to homotopic functions and thus induces a bijection $P^\sharp:C^\sharp(X_2,\bbT) \to C^\sharp(X_1,\bbT)$.
\begin{claim} 
\begin{equation} \label{eq:schwarzConj:Fnu1nu2}
\mathfrak{F}_{\nu_2}([\phi]) = \mathfrak{F}_{\nu_1}(P^\sharp[\phi]) \textup{ for every } \phi \in C(X_2,\bbT).
\end{equation}
\end{claim}

\begin{claimproof}
For $x_2 \in X_2$, recall $\phi_{x_2}(t) = \phi(\tau_2^t x_2)$, and that $\widetilde\phi_{x_2}:\bbR \to \bbR$ denotes a lift. Note that 
\[ (P\phi)_{x_1}(t) = (P\phi)(\tau_1^t x_1) = \phi(\overline{h}\tau_1^tx_1) = \phi(\tau_2^t \overline{h}x_1) = \phi_{\overline{h}x_1}(t).\]
In particular, $\widetilde{\phi}_{\overline{h}x_1}$ is a lift of $(P\phi)_{x_1}$, so we take $\widetilde{(P\phi)}_{x_1} = \widetilde{\phi}_{\overline{h} x_1}$.
For $j=1,2$, choose sets $X_j' \subseteq X_j$ of full $\nu_j$-measure such that one has
\[ \frF_{\nu_1}([P\phi]) = \lim_{t\to\infty} \frac{\widetilde{(P\phi)}_{x_1}(t)}{t}, \quad
\frF_{\nu_2}([\phi]) = \lim_{t\to\infty} \frac{\widetilde\phi_{x_2}(t)}{t}\]
for all $x_j \in X_j'$. On account of $\overline{h}_*\nu_1=\nu_2$, the set $X_2^\star = X_2' \cap \overline{h}[X_1']$ has full $\nu_2$-measure. Choosing $x_2 \in X_2^\star$, one has $\overline{h}^{-1} x_2 \in X_1'$ and thus
\begin{align*}
\frF_{\nu_2}([\phi])
 = \lim_{t \to \infty} \frac{\widetilde{\phi}_{x_2} (t)}{t} 
 = \lim_{t \to \infty} \frac{\widetilde{(P\phi)}_{\overline{h}^{-1}x_2} (t)}{t} 
 = \frF_{\nu_1}([P\phi]),
\end{align*}
which proves the claim.
\end{claimproof}

With the claim proved, we see that the ranges of $\frF_{\nu_1}$ and $\frF_{\nu_2}$ are identical, as desired.
\end{proof}

\section{Computing the Schwartzman Group} \label{sec:computingA}
As in the proof of \cite[Theorem~8.1]{DFGap} we structure the argument via two lemmas. First,  we characterize the homotopy classes from the suspension of $(\Omega,T_{A,b},\mu)$ to the circle $\bbT$. We then use this to compute the relevant Schwartzman group.
To that end, let us fix a compact connected abelian group $\Omega$, and write $X_b:=\Omega \times[0,1]/ ((\omega,1)\sim (T_{A,b}\omega, 0)) $ for the suspension of $(\Omega,T_{A,b})$.

\begin{lemma}
\label{Lemma1}
Suppose $\Omega$ is a compact connected abelian group, $A \in \Aut(\Omega)$, and $b \in \Omega$. For $\chi \in {K}:= \ker(I-\widehat{A})$ and $\beta \in \pi^{-1}(\chi b)$ define $g_{\chi,\beta}: X_b\rightarrow \bbT$ by 
\begin{equation}\label{eq:gknbDef}
 g_{\chi,\beta}([\omega,t]) = \chi \omega+\beta t \ \mathrm{mod} \ \bbZ.\end{equation}
\begin{enumerate}[{\rm(a)}]
\item For each $\chi \in {K}$ and $\beta \in \pi^{-1}(\chi b)$, $g_{\chi,\beta}$ is a well-defined continuous map.
\item Every $g\in C(X_0,\bbT)$ is homotopic to $g_{\chi,\beta}$ for some $\chi \in K$ and $\beta \in \bbZ$.\footnote{Here, $\bbZ$ arises because $b=0$ and $\pi^{-1}(\{\chi 0\}) = \pi^{-1}(\{0\}) = \bbZ$.}
\item If $b$ is in the path component of the identity of $\Omega$, then $X_b$ is homeomorphic to $X_0$.
\item If $b$ is in the path component of the identity of $\Omega$, one has 
\[C^{\sharp} (X_b,\bbT)=\{[g_{\chi,\beta}] : \chi\in {K},\  \beta \in \pi^{-1}(\chi b)]\}.\]
\end{enumerate}
\end{lemma}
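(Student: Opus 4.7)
Parts (a) and (c) are essentially mechanical verifications, so I address them first. For (a), the only non-trivial identification defining $X_b$ is $(\omega,1) \sim (T_{A,b}\omega,0)$, so well-definedness of $g_{\chi,\beta}$ amounts to $\chi\omega + \beta \equiv \chi(A\omega + b) \pmod{\bbZ}$. This follows because $\chi \in K$ forces $\chi A\omega = \chi\omega$, and $\beta \in \pi^{-1}(\chi b)$ forces $\beta \equiv \chi b \pmod{\bbZ}$; continuity is immediate. For (c), I would choose a continuous $\gamma:[0,1] \to \Omega$ with $\gamma(0) = 0$ and $\gamma(1) = b$ (available by hypothesis) and define $\Phi: X_0 \to X_b$ by $\Phi([\omega,t]) = [\omega + \gamma(1-t), t]$. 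Under $\Phi$, the $X_0$-identification $(\omega,1) \sim (A\omega, 0)$ sends $[\omega, 1]$ to $[\omega, 1]$ and $[A\omega, 0]$ to $[A\omega + b, 0] = [T_{A,b}\omega, 0]$, which agree in $X_b$; hence $\Phi$ descends to a continuous map, and $[\omega, t] \mapsto [\omega - \gamma(1-t), t]$ provides a continuous inverse by the same type of check.

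The heart of the lemma is part (b). My plan is to represent $g$ by a continuous $\tilde{g}: \Omega \times [0,1] \to \bbT$ with $\tilde{g}(\omega,1) = \tilde{g}(A\omega,0)$. Since $\Omega$ is a compact connected abelian group, the classical identification $[\Omega, \bbT] \cong \widehat{\Omega}$ (equivalent to $\check H^1(\Omega;\bbZ) \cong \widehat{\Omega}$) supplies a unique $\chi \in \widehat{\Omega}$ homotopic to $\tilde{g}(\cdot, 0)$. Because $\Omega \times [0,1]$ deformation retracts onto $\Omega \times \{0\}$, the map $(\omega, t) \mapsto \tilde{g}(\omega, t) - \chi\omega$ is null-homotopic on $\Omega \times [0,1]$ and therefore lifts to a continuous $\phi: \Omega \times [0,1] \to \bbR$ satisfying $\tilde{g}(\omega,t) = \chi\omega + \pi(\phi(\omega,t))$. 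Substituting into the cocycle constraint $\tilde{g}(\omega,1) = \tilde{g}(A\omega,0)$ yields $(\widehat{A}\chi - \chi)(\omega) = \pi(\phi(A\omega,0) - \phi(\omega,1))$; the left-hand side is a character and the right factors through $\bbR$, so both must vanish. This forces $\chi \in K$ and $m := \phi(\omega,1) - \phi(A\omega,0) \in \bbZ$ (constant by connectedness of $\Omega$). A short computation then shows that $(\omega,t) \mapsto \phi(\omega,t) - mt$ descends from $\Omega \times [0,1]$ to $X_0$ and lifts $g - g_{\chi,m}$ to $\bbR$, giving $g \sim g_{\chi,m}$.

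For (d), I would combine (b) and (c). The homeomorphism $\Phi$ induces a bijection $\Phi^\sharp: C^\sharp(X_b,\bbT) \to C^\sharp(X_0,\bbT)$. Pulling back gives $g_{\chi,\beta} \circ \Phi([\omega,t]) = \chi\omega + \chi\gamma(1-t) + \beta t \pmod{\bbZ}$; choosing a continuous lift $p: [0,1] \to \bbR$ of $\chi \circ \gamma$ with $p(0) = 0$ (so $p(1) \in \pi^{-1}(\chi b)$) and applying the analysis of (b) with $\phi(\omega,t) = p(1-t) + \beta t$ yields $\Phi^\sharp[g_{\chi,\beta}] = [g_{\chi, \beta - p(1)}]$ on $X_0$. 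Since $\beta - p(1) \in \bbZ$ (both $\beta$ and $p(1)$ lie in $\pi^{-1}(\chi b)$) and ranges over all of $\bbZ$ as $\beta$ varies in $\pi^{-1}(\chi b)$, the family $\{[g_{\chi,\beta}] : \chi \in K,\ \beta \in \pi^{-1}(\chi b)\}$ maps under $\Phi^\sharp$ onto the full set provided by (b), so by bijectivity it exhausts $C^\sharp(X_b,\bbT)$. The main obstacle throughout is part (b), specifically the cohomological input that each continuous map $\Omega \to \bbT$ is homotopic to a character: this is where connectedness of $\Omega$ is essential and where the ``character plus winding'' decomposition is produced; the remaining descent bookkeeping is straightforward precisely because connectedness keeps $\bbZ$-valued continuous functions constant.
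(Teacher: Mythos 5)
Your proof is correct, and while it shares the paper's overall skeleton --- parts (a) and (c) by direct verification, part (d) by transporting (b) through the homeomorphism of (c), and part (b) resting on Scheffer's theorem that every continuous map from a compact connected abelian group to $\bbT$ is homotopic to a unique character --- your execution of part (b), the heart of the lemma, takes a genuinely different route. The paper first reads the winding number $\beta$ off a reference circle $S_0=\{[0,t]\}$, manufactures an intermediate representative $g_\star\simeq g$ agreeing with $g_{\chi,\beta}$ on $\Omega_0\cup S_0$, and then shows $g_\star-g_{\chi,\beta}$ is nullhomotopic by lifting it to $\bbR$ fiber by fiber, normalizing each lift at $0$, and establishing continuity of the assembled lift via uniqueness of normalized lifts and uniform continuity. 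You instead lift $\widetilde g-\chi$ globally over the cylinder $\Omega\times[0,1]$ in a single step (nullhomotopy supplied by the deformation retraction onto $\Omega\times\{0\}$) and extract both the condition $\chi\in K$ and the integer winding number $m$ from the boundary mismatch $\phi(\cdot,1)-\phi(A\cdot,0)$ of the lift; the corrected function $\phi-mt$ then descends to $X_0$ and exhibits the nullhomotopy of $g-g_{\chi,m}$ directly. This buys you a cleaner argument --- no intermediate $g_\star$ and no fiber-by-fiber continuity bookkeeping --- at the cost of needing the homotopy-lifting fact on the cylinder rather than only on $\Omega$. One spot worth tightening: your claim that the character $\widehat{A}\chi-\chi$ ``must vanish because the right-hand side factors through $\bbR$'' should be spelled out as: a map of the form $\pi\circ u$ with $u$ continuous and real-valued is nullhomotopic, and a nullhomotopic character is trivial by the uniqueness clause of Scheffer's theorem (this is the same mechanism the paper uses in its Step 2). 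Your part (d) is also more explicit than the paper's, which simply defers to \cite{DFGap}; your computation of the pullback $g_{\chi,\beta}\circ\Phi$ and of the shift $\beta\mapsto\beta-p(1)\in\bbZ$ correctly verifies that $\pi^{-1}(\chi b)$ on $X_b$ corresponds to $\bbZ$ on $X_0$, which is exactly what is needed.
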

\begin{proof}
(a) Given $\chi \in K$ and $\beta \in \pi^{-1}(\chi b)$, $\chi \circ A = \chi$ and $\chi b= \beta \ \mathrm{mod} \ \bbZ$ imply
\[g_{\chi,\beta}([\omega,1]) = \chi \omega + \beta
= \chi(A\omega+b) = g_{\chi,\beta}([A\omega+b,0]),\]
which suffices to show that $g_{\chi,\beta}$ is well-defined and continuous.
\bigskip

(b) Let $g \in C(X_0,\bbT)$ be given. We show that $g$ is homotopic to some $g_{\chi,\beta}$ in a sequence of steps. 
\begin{step} For each $t$, let $\Omega_t = \{ [\omega,t] : \omega\in \Omega  \}$ represent the corresponding fiber in $X_0$. Since $\Omega$ is compact and connected, every continuous map from $\Omega$ to $\bbT$ is  homotopic to exactly one element in $\widehat{\Omega}$ \cite{Scheffer1972} (see also \cite{Plunkett1953MMJ} for earlier work under stronger assumptions). Consequently, for each $t \in \bbT$, the map $g^{(t)}:\Omega \to \bbT$ given by 
\begin{equation}
g^{(t)}(\omega) = g([\omega,t])
\end{equation}
 is homotopic to a unique $\chi_t \in \widehat\Omega$. Since $g^{(t)}$ is homotopic to $g^{(s)}$ for all $t$ and $s$, there is a single $\chi \in \widehat\Omega$ with $\chi = chi_t$ for all $t$. \end{step} 
 
 \begin{step}
 Since $g^{(0)}$ is homotopic to $\chi$ and to $g^{(1)} = g^{(0)} \circ A$, $\chi$ is homotopic to $\chi \circ A$, which by \cite{Scheffer1972} implies $\chi \circ A = \chi$, that is, $\chi \in K$. \end{step} 

\begin{step} Consider the circle $S_0 = \{[0,t]: t \in \bbR\}$. By a standard fact from topology, there exists $\beta \in \bbZ$ such that $[0,t] \mapsto g([0,t])$ is homotopic to the map $[0,t] \mapsto \beta t$. \end{step} 
 
\begin{step} Let us take the $\chi$ from Step~1 and $\beta$ from Step~3. By construction, there exists $g_\star \in C(X_0,\bbT)$ homotopic to $g$ such that $g_\star[\omega,0]= \chi \omega$ and $g_\star[0,t]=\beta t$. Since $g$ is homotopic to $g_\star$, note that $g_\star^{(t)}: \omega \mapsto g_\star([\omega,t])$ is homotopic to $\chi$ for all $t$. \end{step} 

\begin{step} Define $h := g_\star - g_{\chi,\beta}$. From the definitions of $g_\star$ and $g_{\chi,\beta}$ we know that $h$ vanishes on the set $\Omega_0 \cup S_0$ and $h^{(t)}: \omega \mapsto h([\omega,t])$ is nullhomotopic for every $t$.

We will make use of the following fact: a map $f$ from a compact metric space $Y$ to $\bbT$ is nullhomotopic if and only if there exists a continuous function $\widetilde f:Y \to \bbR$ such that $\pi \circ \widetilde f = f$ where $\pi:\bbR \to \bbT$ denotes the canonical projection (in fact we will apply this principle to both $Y =\Omega$ and $Y=X$ below); see, e.g., \cite[Prop.\ 4.10]{DFGap} for a proof. For each $t \in [0,1]$, there is a continuous $\widetilde{h}^{(t)}:\Omega \to \bbR$ such that 
\begin{equation}
\pi \circ \widetilde h^{(t)} = h^{(t)} \text{ and }\widetilde{h}^{(t)}(0) = 0
\end{equation}
by \cite[Prop.\ 4.10]{DFGap}. Moreover, each of these lifts is unique: any two lifts of $h^{(t)}$ must differ by a locally constant function, which is then necessarily constant by connectedness of $\Omega$. Then, defining $\widetilde{h}([\omega,t]) = \widetilde{h}^{(t)}(\omega)$, we see that $\widetilde h$ satisfies $\pi \circ \widetilde h = h$. Moreover, we claim that $\widetilde h$ is continuous. To see this, consider for each $\omega \in \Omega$ 
$$
f_\omega:[0,1] \to \bbT ,\quad s \mapsto h([\omega,s]).
$$ 
For each $\omega$, there is a unique lift $\widetilde f_\omega:[0,1] \to \bbR$ with $\pi \circ \widetilde f_\omega = f_\omega$ and $\widetilde f_\omega(0) =0$.  Since $h([0,t])=0$ for every $t$, we have $f_0(s)=0$ for all $s$. Putting everything together, for each $s \in [0,1]$, $\omega \mapsto \widetilde f_\omega(s)$ is continuous and satisfies $\pi(\widetilde f_\omega(s)) = h([\omega,s])$ and thus by uniqueness, one has $\widetilde f_\omega(s) = \widetilde h^{(s)}(\omega)$ for all $s$ and $\omega$. Moreover, by uniform continuity of $h$, $\omega\mapsto \widetilde f_\omega \in C([0,1])$ is continuous if $C([0,1])$ is given the uniform topology. The continuity of $\widetilde h$ follows.  At last, this implies that  $h$ is nullhomotopic. \end{step} 
Since $h = g_\star - g_{\chi,\beta}$ is nullhomotopic and $g_\star$ is homotopic to $g$, it follows that $g$ is homotopic to $g_{\chi,\beta}$ and we are done.
\bigskip

(c) Assume $b$ is in the path component of the identity, and choose $\gamma:[0,1]\to \Omega$ continuous with $\gamma(0)=0$ and $\gamma(1)=b$. Note that $\gamma' := A^{-1} \circ \gamma$ gives a path from $0$ to $A^{-1}b$. One can then define the desired homeomorphism $\varphi:X_b \to X_0$ via
\begin{equation}
\varphi([\omega,t]) = [\omega+\gamma'(t),t].
\end{equation}
By construction $(\omega,t) \mapsto (\omega+\gamma'(t),t)$ is a continuous map $\Omega \times[0,1]$ to itself. One then only needs to check that $\varphi$ is well-defined, which one can see from
\begin{align*}
\varphi([\omega,1])
& = [\omega+\gamma'(1),1] \\
& = [\omega+A^{-1}b,1] \\
& = [A\omega+b,0] \\
& = \varphi([A\omega+b,0]).
\end{align*}
Thus, $\varphi$ is well-defined and continuous. One can check that $\varphi$ is invertible with continuous inverse, concluding the argument. 

(d) This follows from part~(c) exactly as in \cite{DFGap}.
\end{proof}

\begin{lemma}
\label{Lemma2}
With the same assumptions and notation as in Lemma~\ref{Lemma1}, we have
\begin{equation}
\frF_{\nu}([g_{\chi,\beta}])= \beta \text{ for every } \chi \in K, \ \beta \in \pi^{-1}(\chi b).
\end{equation}
\end{lemma}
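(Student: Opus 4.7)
The plan is to compute the Schwartzman value directly from the definition by tracking a single orbit of the suspension flow and exploiting the two structural hypotheses $\chi \in \ker(\widehat A - I)$ (equivalently $\chi \circ A = \chi$) and $\beta \in \pi^{-1}(\chi b)$ (equivalently $\chi b \equiv \beta \pmod{\bbZ}$). Concretely, I would fix an arbitrary basepoint $x_0 = [\omega_0,0] \in X_b$ and analyze the function $t \mapsto g_{\chi,\beta}(\tau^t x_0)$ piece by piece on each interval $[n,n+1)$.

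For $t \in [n,n+1)$, the equivalence relation defining $X_b$ gives $\tau^t x_0 = [\omega_0, t] = [T_{A,b}^n \omega_0, t-n]$, and since $T_{A,b}^n \omega_0 = A^n \omega_0 + \sum_{k=0}^{n-1} A^k b$, plugging into the definition \eqref{eq:gknbDef} yields
\[
g_{\chi,\beta}(\tau^t x_0)
= \chi\!\Bigl(A^n \omega_0 + \sum_{k=0}^{n-1} A^k b\Bigr) + \beta(t-n) \pmod{\bbZ}.
\]
Applying $\chi \circ A = \chi$ repeatedly collapses the sum to $\chi\omega_0 + n \chi b$, and then $\chi b \equiv \beta \pmod{\bbZ}$ converts this to $\chi\omega_0 + n\beta$. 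The two $n$-dependent terms cancel, leaving
\[
g_{\chi,\beta}(\tau^t x_0) = \chi\omega_0 + \beta t \pmod{\bbZ} \quad \text{for all } t \in \bbR.
\]
This is a single closed-form expression valid across all integer transitions, so the formula itself already encodes the continuity required to produce a global lift.

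With this in hand, choose any $\alpha \in \pi^{-1}(\chi\omega_0) \subset \bbR$ and set $\widetilde{g}_{x_0}(t) := \alpha + \beta t$. Then $\widetilde{g}_{x_0}: \bbR \to \bbR$ is continuous and satisfies $\pi \circ \widetilde{g}_{x_0} = g_{\chi,\beta} \circ \tau^{\bullet} x_0$, so it is a legitimate lift in the sense used to define $\frF_\nu$. The Schwartzman value is then immediate:
\[
\lim_{t\to\infty} \frac{\widetilde{g}_{x_0}(t)}{t}
= \lim_{t\to\infty} \frac{\alpha + \beta t}{t}
= \beta.
\]
Since this holds for every basepoint (not just $\nu$-a.e.), no ergodic-theoretic input is needed, and we conclude $\frF_\nu([g_{\chi,\beta}]) = \beta$.

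There is no real obstacle here beyond careful bookkeeping; the content of the lemma is essentially that the defining formula for $g_{\chi,\beta}$ was rigged so that the composition with the suspension flow lifts globally to an honest affine function of slope $\beta$. The only point to verify with care is that the two uses of the hypotheses combine to eliminate the integer-indexed terms $n\chi b$, which is what ensures continuity of the lift at each wraparound time $t = n$.
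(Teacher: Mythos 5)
Your proof is correct and follows essentially the same route as the paper: both arguments exhibit the explicit affine lift $t \mapsto \widetilde{\chi\omega} + \beta t$ of $t \mapsto g_{\chi,\beta}(\tau^t x)$ and read off the slope $\beta$ as the limit. The only difference is cosmetic — you verify the wraparound consistency at integer times explicitly via $\chi\circ A = \chi$ and $\chi b \equiv \beta$, whereas the paper takes the lift's validity as immediate from the well-definedness already established in Lemma~\ref{Lemma1}(a).
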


\begin{proof}
The proof is analogous to the proof of \cite[Lemma~8.3]{DFGap}. Fix $\chi$ and $\beta$, and denote $\phi = g_{\chi,\beta}$. For each $x = [\omega,s] \in X$, let us define ${\phi_x:\bbR\rightarrow \bbR}$ by $\phi_x(t)=\phi(\tau^t x)$. Its lift $\widetilde{\phi}_x:\bbR \rightarrow \bbR$ is a continuous function chosen so that $\phi_x=\pi \circ \widetilde{\phi}_x$ where $\pi$ is the canonical projection map. Using \eqref{eq:gknbDef}, we see that we may take 
\begin{equation} \label{eq:phibetalift}
\widetilde\phi_x(t) = \widetilde{\chi\omega} + \beta(s+t), \quad t \in \bbR,
\end{equation}
where $\widetilde{\chi\omega} \in \bbR$ is any element from $\pi^{-1}(\chi \omega)$.
For $\nu$-a.e.\ $x$, we have
\begin{align*}
\frF_{\nu}(\phi)
 = \lim_{t \to \infty} \frac{\widetilde{\phi}_x (t)}{t}.
 \end{align*}
  Fixing such an $x$ and using \eqref{eq:phibetalift} gives
  \begin{align*}
\frF_{\nu}(\phi)
  = \lim_{t \to \infty}\frac{ \widetilde{\chi\omega} + \beta(s+t)}{t} = \beta , \end{align*}
which finishes the proof.
\end{proof}

\begin{proof}[Proof of Theorem~\ref{t:main}]
This is a consequence of Lemmas~\ref{Lemma1} and \ref{Lemma2}.
\end{proof}

\section{The Doubling Map on the Dyadic Solenoid} \label{sec:solenoid}
 As a consequence of Theorem~\ref{t:main}, we can compute labels associated with affine autormorphisms on solenoids. For instance, we can show that the set of labels associated with the doubling map on the dyadic solenoid (see Subsection~\ref{subsec:solenoid}) is $\bbZ$:

\begin{corollary} \label{coro:2adicSchwartzGroup}
Let $\Omega$ be the dyadic solenoid, $\Omega=\bbR\times \bbZ_2 / \{(a,-a)\text{ : } a\in \bbZ\}$. Let $T$ be the doubling map on the solenoid, $T([r,s])=[2r,2s]$  and $\mu$ be a $T$-ergodic measure on $\Omega$, then
\begin{equation}
\frA(\Omega,T,\mu) = \bbZ.
\end{equation}
\end{corollary}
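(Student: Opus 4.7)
The plan is to apply Theorem~\ref{t:main} directly, with the key observation that the doubling map $T$ on the dyadic solenoid is not merely an affine automorphism but a \emph{group} automorphism, so in the notation of the main theorem we may take $A = T$ and $b = 0$. This is important because once $b=0$, the formula in Theorem~\ref{t:main} collapses: for any $\chi \in \widehat{\Omega}$, $\chi \cdot 0 = 0$, so $\pi^{-1}(\chi b) = \pi^{-1}(0) = \bbZ$ regardless of $\chi$, and since $\ker(\widehat{T}-I)$ always contains the trivial character the union is nonempty. The detailed structure of $\ker(\widehat{T} - I)$ is therefore irrelevant here, which is what drives the surprisingly clean answer.

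To invoke Theorem~\ref{t:main} I need to verify its hypotheses: that $\Omega$ is a compact connected abelian group, that $T$ is a continuous group automorphism, and that $0$ lies in the path component of the identity. Compactness and the abelian group structure of $\Omega$ are manifest from the definition. Connectedness I would justify via Proposition~\ref{prop:allSolenoidsEquiv}, passing to the realization $\mathcal{S}_2 = \varprojlim \bbR/2^n\bbZ$, which is an inverse limit of connected compact Hausdorff spaces under continuous surjections and is therefore connected. That $T[r,s] = [2r,2s]$ is a continuous homomorphism is immediate from the formula; to see it is an \emph{auto}morphism, I would appeal to Pontryagin duality: the dual group is $\widehat{\Omega} \cong \bbZ[1/2]$ and the dual map $\widehat{T}$ acts as multiplication by $2$, which is a bijection of $\bbZ[1/2]$, whence $T$ itself is a bijection (and automatically a homeomorphism because it is a continuous bijection from a compact space to a Hausdorff space). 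Finally, $0$ is trivially in its own path component.

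With the hypotheses in place, the theorem yields
\begin{equation*}
\frA(\Omega, T, \mu) \;=\; \bigcup_{\chi \in \ker(\widehat{T}-I)} \pi^{-1}(\chi \cdot 0) \;=\; \pi^{-1}(0) \;=\; \bbZ,
\end{equation*}
which is the desired conclusion. Because the three realizations $\mathcal{S}_1, \mathcal{S}_2, \mathcal{S}_3$ are topologically conjugate via measure-preserving conjugacies in the natural sense, Proposition~\ref{prop:schwartzConj} then delivers the same answer on any of them. There is no substantive obstacle in this argument; the entire content is the observation that the pure-automorphism case of Theorem~\ref{t:main} forces the Schwartzman group to be $\bbZ$, together with the routine verification that the doubling map on the solenoid is an honest group automorphism.
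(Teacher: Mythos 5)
Your proof is correct and follows essentially the same route as the paper: verify the hypotheses of Theorem~\ref{t:main} for the doubling map on the solenoid (the paper packages this as Proposition~\ref{solenoid}) and then read the answer off from \eqref{eq:AforAffine}. The only cosmetic difference is that you collapse the union using $b=0$, so that $\pi^{-1}(\chi b)=\bbZ$ for every $\chi$, whereas the paper instead observes that $\ker(\widehat{A}-I)$ is trivial; both observations are immediate and yield $\bbZ$.
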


At first glance, one may find Corollary~\ref{coro:2adicSchwartzGroup} somewhat surprising. Indeed, the main result of \cite{DF22Doubling} was the conclusion that $\frF([\phi]) \in \bbZ$ for any $\phi \in C(X,\bbT)$ that factors through the (non-invertible) doubling map on the circle (recall that $\frF$ denotes the Schwartzman homomorphism). At the time, the authors suspected that that conclusion did not hold for more general functions on the solenoid and that one could leverage the local Cantor structure to produce continuous invariant sections having nonintegral rotation numbers. However, Corollary~\ref{coro:2adicSchwartzGroup} implies that this is not the case.

Theorem~\ref{t:main} applies to the doubling map on the dyadic solenoid.

\begin{proposition}\label{solenoid}
The solenoid, $\mathcal{S}_1$ is a compact connected abelian group, the doubling map $T_1$ is an automorphism on $\mathcal{S}_1$ and there exists a fully supported $T_1$-ergodic measure on $\mathcal{S}_1$.
\end{proposition}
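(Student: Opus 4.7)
The plan is to verify each of the three claims about $\mathcal{S}_1 = (\bbR \times \bbZ_2)/A$ in turn, taking advantage of Proposition~\ref{prop:allSolenoidsEquiv} to switch to the inverse limit description $\mathcal{S}_2 = \varprojlim \bbR/2^n\bbZ$ wherever that is more convenient. For the group structure, $\bbR$ and $\bbZ_2$ are abelian topological groups, so $\bbR \times \bbZ_2$ is, and $A = \{(a,-a): a \in \bbZ\}$ is discrete and hence closed, so the quotient $\mathcal{S}_1$ is a Hausdorff abelian topological group. Compactness follows from the continuous surjection $[0,1] \times \bbZ_2 \to \mathcal{S}_1$, $(r,s) \mapsto [r,s]$, whose domain is compact. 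Connectedness is the most subtle of the three elementary properties since it is not apparent from the quotient description alone; for this I would pass to $\mathcal{S}_2$ and invoke the classical fact that an inverse limit of connected compact Hausdorff spaces is connected.

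For the assertion that $T_1$ is an automorphism, the map is manifestly a continuous group endomorphism, so I would only need to verify bijectivity (the inverse is then automatically continuous, as $\mathcal{S}_1$ is compact Hausdorff). For injectivity, $[2r,2s]=[0,0]$ forces $2r = a$ and $2s = -a$ for some integer $a$, and the constraint $s \in \bbZ_2$ forces $a$ to be even, whence $(r,s) \in A$. For surjectivity, given $[r,s]$ I would split into two cases depending on whether $s \in 2\bbZ_2$ or $s \in 1 + 2\bbZ_2$; in the first case $[r/2, s/2]$ is a preimage, while in the second case I would replace $[r,s]$ by the equal class $[r+1, s-1]$, which now has $s-1 \in 2\bbZ_2$, reducing to the first case.

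Finally, existence of a fully supported ergodic measure is provided by the normalized Haar measure $\mu$ on $\mathcal{S}_1$, which is invariant under any continuous automorphism of a compact abelian group and has full topological support. For ergodicity I would invoke the classical dual-group criterion: an automorphism of a compact abelian group is Haar-ergodic if and only if the dual automorphism has no nontrivial finite orbits on the Pontryagin dual. The dual of the dyadic solenoid is well known to be $\bbZ[1/2]$, and under this identification the dual of $T_1$ is multiplication by $2$; since the equation $2^n \alpha = \alpha$ in $\bbZ[1/2]$ forces $\alpha = 0$, the criterion is satisfied. The most technical step I expect is the identification of $\widehat{\mathcal{S}_1}$ with $\bbZ[1/2]$ and of $\widehat{T_1}$ with multiplication by $2$, a standard computation that I would either carry out directly by computing characters of $\bbR \times \bbZ_2$ annihilating $A$ or cite from the textbook references given earlier.
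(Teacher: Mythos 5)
Your proposal is correct, and all three verifications go through; the interesting difference from the paper lies in how the fully supported ergodic measure is produced. The paper dispatches the first two claims by citation ("it is known that the dyadic solenoid is a compact connected abelian group" and "it is straightforward to check" that $T_1$ is an automorphism), and then obtains the measure dynamically: it invokes the Bowen--Margulis (or Sinai--Ruelle--Bowen) measure on the attractor realization $\mathcal{S}_3$, which is fully supported and $T_3$-ergodic by results discussed in \cite{DFZ}, and pushes it forward to $\mathcal{S}_1$ through the conjugacies of Proposition~\ref{prop:allSolenoidsEquiv}. You instead take normalized Haar measure on $\mathcal{S}_1$ and prove ergodicity via Halmos's dual criterion, identifying $\widehat{\mathcal{S}_1}$ with $\bbZ[1/2]$ and $\widehat{T_1}$ with multiplication by $2$, which has no nontrivial finite orbits. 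Your route is more self-contained and stays entirely within the harmonic-analytic framework the rest of the paper already uses (indeed, the same identification of the dual group underlies the proof of Corollary~\ref{coro:2adicSchwartzGroup}, where $\chi(2\omega)=\chi(\omega)$ forces $\chi=0$); it also avoids importing machinery from hyperbolic dynamics. The paper's route is shorter on the page because it leans on existing literature, and it has the mild side benefit of exhibiting a natural invariant measure from the smooth-dynamics perspective on $\mathcal{S}_3$. One small point worth noting: since the statement only asserts \emph{existence} of a fully supported ergodic measure, either construction suffices, and in fact the two measures coincide (Haar measure is the measure of maximal entropy for the solenoid automorphism), so the approaches are even closer than they first appear. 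Your explicit verifications of compactness, connectedness (via the inverse-limit description and Proposition~\ref{prop:allSolenoidsEquiv}), and bijectivity of $T_1$ are all sound, including the parity argument in the injectivity step and the case split on $s \bmod 2\bbZ_2$ for surjectivity.
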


\begin{proof}
It is known that the dyadic solenoid is a compact connected abelian group. It is then straightforward to check that the doubling map, $T_1$, is an automorphism.
 All that remains is to construct a fully-supported $T_1$-ergodic measure on $\mathcal{S}_1$. As discussed in \cite{DFZ} there exists a Bowen--Margulis measure (as well as a Sinai--Ruelle--Bowen measure), $\mu$, on $\mathcal{S}_3$ that is fully supported and $T_3$-ergodic.  Pushing this measure forward using the maps discussed in the proof of Proposition~\ref{prop:allSolenoidsEquiv}, we obtain a measure on $\mathcal{S}_1$ with the desired properties. \end{proof}

\begin{proof}[Proof of Corollary~\ref{coro:2adicSchwartzGroup}]
Proposition~\ref{solenoid} implies that we can apply Theorem~\ref{t:main} to construct the set of labels associated with $(\mathcal{S}_1,T_1,\mu_1)$ where $\mu_1$ is a fully supported $T_1$-ergodic measure on $\mathcal{S}_1$. The result follows by noting that $K$ defined in Theorem~\ref{t:main} is trivial in this case. Indeed, if $\chi \in K$, then $\chi(2\omega) = \chi(\omega)$ for all $\omega$, which then forces $\chi(\omega)=0$.
\end{proof}    

\section{Applications to Jacobi Matrices} \label{sec:Jacobi}

As a byproduct of our results, we are able to answer a question posed in \citep{DFZ} about gap labels for ergodic Jacobi matrices defined by the doubling map. Recall that a dynamically defined family of Jacobi matrices is specified by
\begin{equation}\label{e.jacmat}
[J_\omega\psi](n) = \overline{p(T^{n-1}\omega)}\psi(n-1)+q(T^n\omega)\psi(n) + p(T^n\omega)\psi(n+1),
\end{equation}
where $\omega \in \Omega$, a compact metric space, $T:\Omega \to \Omega$ is a homeomorphism, $p \in C(\Omega,\bbC)$, and $q \in C(\Omega,\bbR)$. Fixing a fully supported $T$-ergodic Borel probability measure on $\Omega$, the density of states measure and integrated density of states are given by \eqref{e.DOSMv2} and \eqref{e.IDSv2}.

\begin{corollary} \label{coro:JacobiLabels}
Suppose $\Omega$ is a compact connected abelian group, $A \in \Aut(\Omega)$, $b$ is in the path component of the identity of $\Omega$, and $\mu$ is a fully supported $T_{A,b}$-ergodic probability measure on $\Omega$. Given continuous $p$ and $q$, let $\{J_\omega\}$ denote the associated family of Jacobi operators as in \eqref{e.jacmat}. Then,
\[ k(E) \in \set{ \beta :  \beta \in \pi^{-1}(\chi b) \text{ for some } \chi \in \ker(\widehat{A}-I) } 
\]
for all $E \in \bbR \setminus \Sigma$, where $\Sigma$ denotes the almost-sure spectrum of the family $\{J_\omega\}_{\omega \in \Omega}$ and $k$ denotes the associated IDS.
\end{corollary}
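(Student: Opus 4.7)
The plan is to recognize that Corollary~\ref{coro:JacobiLabels} is essentially a direct composition of two results that are already established in the excerpt: the general gap-labelling theorem for ergodic Jacobi matrices (Theorem~\ref{t:DFZmain}) and our main computation of the Schwartzman group for affine automorphisms (Theorem~\ref{t:main}). There is no substantive new dynamical or operator-theoretic content to produce; the work is to verify that the hypotheses of both theorems are satisfied by the system $(\Omega, T_{A,b}, \mu)$, and then to chain the two conclusions together.

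First, I would check that $(\Omega, T_{A,b})$ is an invertible topological dynamical system in the sense of Subsection~\ref{sec:schwartzman}. Since $\Omega$ is a compact connected abelian (hence metrizable) group and $A \in \Aut(\Omega)$ is a continuous automorphism, the affine map $T_{A,b} : \omega \mapsto A\omega + b$ is a homeomorphism with inverse $\omega \mapsto A^{-1}(\omega - b)$. Together with the assumption that $\mu$ is a fully supported $T_{A,b}$-ergodic Borel probability measure, this lets me invoke Theorem~\ref{t:DFZmain}, which yields
\begin{equation*}
k(E) \in \frA(\Omega, T_{A,b}, \mu) \quad \text{for all } E \in \bbR \setminus \Sigma.
\end{equation*}

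Next, I would verify the hypotheses of Theorem~\ref{t:main}: $\Omega$ is compact connected abelian, $A \in \Aut(\Omega)$, $b$ lies in the path component of the identity of $\Omega$, and $\mu$ is $T_{A,b}$-ergodic. These are precisely the assumptions being made in the statement of the corollary, so Theorem~\ref{t:main} applies and identifies
\begin{equation*}
\frA(\Omega, T_{A,b}, \mu) = \bigcup_{\chi \in \ker(\widehat A - I)} \pi^{-1}(\chi b)
= \set{ \beta \in \bbR : \beta \in \pi^{-1}(\chi b) \text{ for some } \chi \in \ker(\widehat A - I) }.
\end{equation*}
Substituting this into the inclusion above gives the claim.

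The main obstacle, such as it is, is purely bookkeeping: one needs to check that the measure-theoretic setup used in Theorem~\ref{t:DFZmain} (full support, ergodicity, invertibility) really does match the setup of Theorem~\ref{t:main}. Since all of these hypotheses are either assumed outright in the corollary or are immediate from $A$ being a continuous automorphism, no real difficulty arises. Thus the proof reduces to a one-line combination of the two theorems.
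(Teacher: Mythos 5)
Your proposal is correct and matches the paper's own proof, which simply cites Theorem~\ref{t:DFZmain} together with Theorem~\ref{t:main}; the additional hypothesis-checking you include (invertibility of $T_{A,b}$, full support, ergodicity) is routine and left implicit in the paper.
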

\begin{proof}
This is a consequence of Theorem~\ref{t:main} and \cite[Theorem~1.1]{DFZ}.
\end{proof}

\begin{corollary} \label{coro:JacobiDoubling}
Suppose $(\Omega,T)$ denotes the doubling map on the dyadic solenoid and $\mu$ is a fully supported $T$-ergodic measure. Given continuous $p$ and $q$, let $\{J_\omega\}$ denote the associated family of Jacobi operators as in \eqref{e.jacmat}. Then, the almost-sure spectrum $\Sigma$ is connected.
\end{corollary}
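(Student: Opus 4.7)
The plan is to combine the gap-labelling theorem for Jacobi matrices (Theorem~\ref{t:DFZmain}) with the computation of the Schwartzman group in Corollary~\ref{coro:2adicSchwartzGroup}, and then observe that the resulting arithmetic constraint on the IDS rules out any bounded (``interior'') gaps. Concretely, the doubling map $T$ on the dyadic solenoid has the form $T_{A,0}$ with $A$ the multiplication-by-$2$ automorphism, so $b=0$ lies trivially in the path component of the identity; thus Corollary~\ref{coro:JacobiLabels} applies, or equivalently one may directly feed $\frA(\Omega,T,\mu) = \bbZ$ (Corollary~\ref{coro:2adicSchwartzGroup}) into Theorem~\ref{t:DFZmain} to conclude that $k(E) \in \bbZ$ for every $E \in \bbR \setminus \Sigma$.

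Next I would use the fact that the DOSM $\kappa$ is a Borel probability measure, so that $k:\bbR \to [0,1]$, and consequently $k(E) \in \{0,1\}$ on every gap of $\Sigma$. Writing $a = \min \Sigma$ and $b = \max\Sigma$, the unbounded gaps $(-\infty,a)$ and $(b,\infty)$ carry the values $0$ and $1$, respectively. Suppose toward a contradiction that there is an interior gap $(c,d)$ with $a \leq c < d \leq b$ and $c,d \in \Sigma$, with $c > a$. Using the standard identity $\Sigma = \supp \kappa$ (valid because $\mu$ is fully supported and ergodic, see the discussion surrounding \eqref{e.DOSMv2}--\eqref{e.IDSv2}), every neighborhood of $c$ has positive $\kappa$-mass, so $k(c) > 0$; symmetrically, every neighborhood of $d$ has positive $\kappa$-mass lying strictly to the right of the gap, so $k(c) = k(E) < 1$ for any $E \in (c,d)$. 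This contradicts $k(c) \in \{0,1\}$.

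It follows that $\Sigma$ has no interior gaps, i.e.\ $\Sigma = [a,b]$, which is connected.

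The only nontrivial routine verification is the exclusion of interior gaps via $\Sigma = \supp\kappa$; all other ingredients (gap labelling and the computation $\frA = \bbZ$) have already been established. In particular, there is no real analytic difficulty here once Theorem~\ref{t:main} is in hand---the entire content of Corollary~\ref{coro:JacobiDoubling} is the observation that a countable subgroup of $\bbR \cap [0,1]$ must be $\{0,1\}$ if it equals $\bbZ \cap [0,1]$, which immediately collapses the topological structure of $\Sigma$.
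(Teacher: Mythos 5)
Your proposal is correct and follows the same route as the paper, which simply cites Corollaries~\ref{coro:2adicSchwartzGroup} and \ref{coro:JacobiLabels}; you have merely made explicit the standard step that a gap label lying in $\bbZ\cap[0,1]=\{0,1\}$ is incompatible with an interior gap because $\Sigma=\supp\kappa$ forces $0<k(E)<1$ there. (Minor nit: you reuse $b$ for $\max\Sigma$ after using it for the translation element, but this causes no harm.)
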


\begin{proof}
This is a consequence of Corollaries~\ref{coro:2adicSchwartzGroup} and \ref{coro:JacobiLabels}.
\end{proof}

Using Corollary~\ref{coro:JacobiDoubling}, we can apply the method of \cite{DF22Doubling} to see that half-line Jacobi matrices dynamically defined by the doubling map on $\bbT$ have no gaps in their essential spectra, which extends \cite[Theorem~1.6]{DFZ} to the more general setting. As discussed in \cite[Remark~4.1]{DFZ}, this could not be done solely with the work in \cite{DFZ} and required new insights.  

Suppose $\Omega = \bbT$, $T:\Omega \to \Omega$ is the doubling map $T\omega = 2\omega$, and $\mu$ denotes Lebesgue measure on $\bbT$. Given continuous $p$ and $q$, let $\{J_\omega\}$ denote the associated family of half-line Jacobi operators given by 
\begin{equation}
[J_\omega  u](n) = \begin{cases}
\overline{p(T^{n-1}\omega)}u(n-1) + q(T^n\omega)u(n)  + p(T^n\omega)u(n+1) & n > 0 \\
q(\omega)u(0) + p(\omega)u(1) & n=0.
 \end{cases}
\end{equation}
Notice that we cannot discuss whole-line operators in this setting, since $T$ is not invertible.  Operators generated by the doubling map on $\bbT$ have been studied by a number of authors; see, for example, \cite{AviDamZha2020, Bjerkloev2020, BourgainSchlag2000CMP, ChulaevskySpencer1995, DF22Doubling, DFZ, DamKil2005b, Zhang2016}.

Given this setup, there exists $\Sigma \subseteq \bbR$ such that $\sigma_{\rm ess}(J_\omega) = \Sigma$ for $\mu$-a.e.\ $\omega \in \Omega$.   In \cite{DFZ}, the authors asked whether this almost-sure essential spectrum may have any gaps. 

\begin{corollary} \label{coro:JacobiDoubling2}
With $\Omega$, $T$, $p$, $q$, and $\Sigma$ as in the previous paragraph, $\Sigma$ is connected.
\end{corollary}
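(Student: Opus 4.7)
The plan is to invoke the method of \cite{DF22Doubling}: lift the non-invertible system $(\bbT,T,\mu)$ to its natural extension---the invertible doubling map $(\mathcal{S}_1,T_1,\mu_1)$ on the dyadic solenoid---apply Corollary~\ref{coro:JacobiDoubling} to the whole-line family obtained by pulling back $p$ and $q$, and then transfer the conclusion back to the half-line setting on $\bbT$.

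Concretely, let $\pi:\mathcal{S}_1\to\bbT$ denote the factor map, so that $\pi\circ T_1 = T\circ \pi$ and $\pi_*\mu_1=\mu$. Setting $\tilde p := p\circ\pi$ and $\tilde q := q\circ\pi$, form the resulting whole-line ergodic family $\{\widetilde J_\sigma\}_{\sigma\in\mathcal{S}_1}$ on $\ell^2(\bbZ)$, with almost-sure spectrum $\widetilde\Sigma$. By Corollary~\ref{coro:JacobiDoubling}, $\widetilde\Sigma$ is connected. The key compatibility is that for $\mu_1$-a.e.\ $\sigma$, writing $\omega = \pi(\sigma)$, the identity $\pi(T_1^n\sigma)=T^n\omega$ for $n\geq 0$ forces $J_\omega = \widetilde J_\sigma|_{\ell^2(\bbZ_+)}$. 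Consequently, it suffices to show $\Sigma=\widetilde\Sigma$, since connectedness then transfers.

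The inclusion $\Sigma\subseteq\widetilde\Sigma$ is the easy direction: Weyl's theorem applied to the rank-one decoupling of $\widetilde J_\sigma$ along the bond between $n=-1$ and $n=0$, combined with the standard fact that invertible ergodic Jacobi matrices have no isolated almost-sure eigenvalues, yields $\sigma_{\mathrm{ess}}(J_\omega)\subseteq \sigma_{\mathrm{ess}}(\widetilde J_\sigma)=\widetilde\Sigma$ almost surely. The reverse inclusion $\widetilde\Sigma\subseteq\Sigma$ is the main obstacle, as it requires that no spectrum is lost upon discarding the left half-line. I would handle it by the translation trick of \cite{DF22Doubling}: given $E\in\widetilde\Sigma$, a compactly supported Weyl sequence for $\widetilde J_\sigma$ at energy $E$ can be pushed arbitrarily far into $\bbZ_+$ by pre-shifting $\sigma$ to $T_1^{-N}\sigma$---an operation available precisely because $T_1$ is invertible and $\mu_1$ is $T_1$-invariant---producing a Weyl sequence for the half-line operator $J_{\pi(T_1^{-N}\sigma)}$ and hence placing $E\in\Sigma$. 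Once $\Sigma=\widetilde\Sigma$ is established, connectedness of $\Sigma$ follows immediately from Corollary~\ref{coro:JacobiDoubling}.
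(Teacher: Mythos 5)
Your proposal is correct and follows essentially the same route as the paper: the paper's proof simply invokes Corollary~\ref{coro:JacobiDoubling} together with ``a repetition of the arguments in \cite[Section~4]{DFZ},'' and your write-up is precisely an unpacking of that method (pass to the natural extension on the solenoid, identify the half-line almost-sure essential spectrum with the whole-line almost-sure spectrum $\widetilde\Sigma$ via finite-rank decoupling in one direction and the shift-the-Weyl-sequence argument in the other, then quote connectedness of $\widetilde\Sigma$). The only step stated a bit quickly is the final one in the inclusion $\widetilde\Sigma\subseteq\Sigma$, where landing in the \emph{essential} spectrum for a.e.\ $\omega$ requires producing infinitely many disjoint approximate eigenvectors via recurrence/density of orbits, exactly as in the cited arguments.
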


\begin{proof}
This follows from Corollary~\ref{coro:JacobiDoubling} and a repetition of the arguments in \cite[Section~4]{DFZ}.
\end{proof}

\begin{appendix}
\section{A Disconnected Example}
We start with the following helpful observation, which applies to arbitrary finite dynamical systems.
\begin{theorem} \label{t:frAforFinite}
Suppose $\Omega$ is a finite set with the discrete topology, $T:\Omega \to \Omega$ is a bijection, and $\mu$ is a $T$-ergodic probability measure on $\Omega$. One has
\begin{equation}
\frA(\Omega,T,\mu) = \frac{1}{p}\bbZ,
\end{equation}
where $p = \#\supp\mu$.
\end{theorem}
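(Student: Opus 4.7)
The plan is to reduce the computation to a very concrete picture: the suspension restricted to the support of $\mu$ is a single topological circle of ``length'' $p$, on which I can read off the Schwartzman homomorphism directly from the degree of continuous $\bbT$-valued maps.

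First, by ergodicity, $\supp \mu$ must be a single periodic orbit of cardinality $p$, say $\{\omega_0, T\omega_0, \ldots, T^{p-1}\omega_0\}$, with $\mu$ the uniform probability measure on this orbit. The induced measure $\nu$ on $X$ is then concentrated on
$$C := \{[T^i\omega_0, t] : 0 \le i \le p-1, \; t \in [0,1]\} \subseteq X,$$
and the identifications $(T^n\omega, s) \sim (\omega, s+n)$ defining $X$ glue $C$ into a single topological circle. Explicitly, I would exhibit a homeomorphism $\psi: \bbR/p\bbZ \to C$ by $\psi(s) = [T^{\lfloor s\rfloor}\omega_0, \, s - \lfloor s \rfloor]$, which intertwines translation on $\bbR/p\bbZ$ with the suspension flow $\tau^s$.

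Next, I would observe that $\frF_\nu([g])$ depends only on $g|_C$, because the defining limit holds $\nu$-a.e.\ and $\nu$ is supported on $C$; conversely, every continuous $h:C \to \bbT$ extends to some continuous $g:X \to \bbT$ (the remaining orbits give clopen components of $X$, so one can just take $g$ constant on each). Hence the range of $\frF_\nu$ on $C^\sharp(X,\bbT)$ equals its range on $C^\sharp(C,\bbT)$. Since $C \cong \bbT$, homotopy classes of continuous maps $C \to \bbT$ are classified by an integer degree, with representatives $h_m \circ \psi^{-1}$ where $h_m: \bbR/p\bbZ \to \bbR/\bbZ$ is $h_m(s) = ms/p \bmod 1$, for each $m \in \bbZ$.

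Finally, evaluating $\frF_\nu$ at $h_m \circ \psi^{-1}$ and starting from $x = \psi(0)$, the map $t \mapsto (h_m \circ \psi^{-1})(\tau^t x) = h_m(t \bmod p)$ lifts continuously to $t \mapsto mt/p$, yielding
$$\frF_\nu\bigl([h_m \circ \psi^{-1}]\bigr) = \lim_{t \to \infty} \frac{mt/p}{t} = \frac{m}{p}.$$
Letting $m$ range over $\bbZ$ gives $\frA(\Omega,T,\mu) = \frac{1}{p}\bbZ$, as desired. The main (mild) obstacle is the bookkeeping required to identify $C$ with a single topological circle of length $p$ via the suspension's equivalence relation; once that identification is in hand, the rest is elementary degree theory for maps $\bbT \to \bbT$.
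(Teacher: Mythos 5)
Your proposal is correct. It follows the same first reduction as the paper --- by ergodicity on a finite discrete space, $\supp\mu$ is a single periodic $p$-cycle carrying the uniform measure, and the Schwartzman group only sees the suspension over that cycle --- but where the paper then simply observes that $(\supp\mu, T|_{\supp\mu})$ is conjugate to the shift on $\bbZ/p\bbZ$ and cites the known computation $\frA(\bbZ/p\bbZ,S,\eta)=p^{-1}\bbZ$ from \cite[Proposition~6.1]{DFGap} via Proposition~\ref{prop:schwartzConj}, you carry out that computation from scratch: you identify the suspension over the orbit with a circle of circumference $p$, classify maps to $\bbT$ by degree, and evaluate $\frF_\nu$ on the degree-$m$ representative to get $m/p$. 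Your version is therefore self-contained where the paper's is a two-line citation chase; it also makes explicit a point the paper glosses over, namely why $\frA(\Omega,T,\mu)=\frA(\supp\mu,T|_{\supp\mu},\mu)$ (the a.e.\ limit only depends on $g|_C$, and since $C$ is clopen every continuous map on $C$ extends to $X$). The trade-off is only length: the paper's route generalizes instantly to any situation where the restricted system is recognized as a known one, while yours is the more instructive and verifiable argument for this specific finite case. Both are sound.
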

\begin{proof}
By ergodicity, $\supp\mu = \{\omega,T\omega,\ldots,T^{p-1}\omega\}$ for some $\omega \in \supp\mu$, so $(\supp\mu,T|_{\supp \mu})$ is conjugate to the shift $S:\omega \mapsto \omega+1$ on $\bbZ/p\bbZ$. Thus, using Proposition~\ref{prop:schwartzConj} and \cite[Proposition~6.1]{DFGap}, one has 
\[
\frA(\Omega,T,\mu) = \frA(\supp\mu,T|_{\supp \mu},\mu) = \frA(\bbZ/p\bbZ,S,\eta) = p^{-1}\bbZ,
\]
where $\eta$ denotes normalized counting measure on $\bbZ/p\bbZ$.
\end{proof}
\begin{corollary} \label{coro:disconExample}
There exists a disconnected group $\Omega$, an affine transformation $T=T_{A,b}:\Omega \to \Omega$ with $A \in \Aut(\Omega)$ and $b$ in the path component of the identity, and a $T$-ergodic measure $\mu$ on $\Omega$ such that
\begin{equation} \label{eq:broadSchwFails}
\frA(\Omega,T,\mu) \neq \set{ \beta :  \beta \in \pi^{-1}(\chi b) \text{ for some } \chi \in \ker(\widehat{A}-I) }.
\end{equation}
 In particular, the assumption of connectedness cannot be removed from Theorem~\ref{t:main}.
\end{corollary}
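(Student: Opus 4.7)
The plan is to exhibit an explicit finite counterexample. Take $\Omega = \bbZ/3\bbZ$ with the discrete topology, so that the path component of the identity is $\{0\}$. Let $A \in \Aut(\Omega)$ be the nontrivial automorphism $A(x) = 2x$ (equivalently, $x \mapsto -x$), and set $b = 0$, which lies in the path component of the identity trivially. The corresponding affine transformation $T_{A,0} = A$ has exactly two orbits, $\{0\}$ and $\{1,2\}$. I would then take $\mu = \tfrac{1}{2}(\delta_1 + \delta_2)$, which is clearly $T$-invariant and ergodic because $T$ transitively permutes $\supp\mu$.

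Next, I would compare the two sides of the formula \eqref{eq:AforAffine} directly. On the Schwartzman side, Theorem~\ref{t:frAforFinite} yields
\[
\frA(\Omega, T, \mu) = \tfrac{1}{\#\supp\mu}\bbZ = \tfrac{1}{2}\bbZ.
\]
On the character side, identify $\widehat{\Omega}$ with $\bbZ/3\bbZ$ via $\chi_k(x) = kx/3 \bmod \bbZ$. Then $\widehat{A}\chi_k(x) = \chi_k(2x) = \chi_{2k \bmod 3}(x)$, so $\chi_k \in \ker(\widehat A - I)$ if and only if $2k \equiv k \pmod 3$, which forces $k = 0$. Hence $\ker(\widehat A - I) = \{\chi_0\}$, and since $\chi_0(b) = 0$ one finds
\[
\bigcup_{\chi \in \ker(\widehat A - I)} \pi^{-1}(\chi b) = \pi^{-1}(0) = \bbZ.
\]

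Since $\tfrac{1}{2}$ lies in $\frA(\Omega,T,\mu)$ but not in $\bbZ$, the inequality \eqref{eq:broadSchwFails} follows, which also shows that the connectedness hypothesis in Theorem~\ref{t:main} cannot simply be dropped. There is no real obstacle in this argument: Theorem~\ref{t:frAforFinite} handles the left-hand side for free, and the job reduces to selecting a finite abelian group together with an automorphism $A$ such that $\ker(\widehat{A} - I)$ is strictly smaller than $\widehat{\Omega}$ while $T_{A,0}$ still admits an ergodic measure whose support has more than one point. The cyclic group $\bbZ/3\bbZ$ with inversion is the smallest example that realizes this.
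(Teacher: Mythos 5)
Your proposal is correct and follows essentially the same route as the paper: both use $\Omega = \bbZ/3\bbZ$ with $A:\omega \mapsto 2\omega$, $b=0$, and the ergodic measure supported on the two-point orbit $\{1,2\}$, comparing $\frA = \tfrac12\bbZ$ (via Theorem~\ref{t:frAforFinite}) against the character-side computation $\ker(\widehat A - I) = \{\chi_0\}$, which gives $\bbZ$. The only cosmetic difference is that the paper sets up the dual-group computation for general $\bbZ/p\bbZ$ before specializing to $p=3$, $A=2$.
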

\begin{proof}
 Let us consider $\Omega = \bbZ/p\bbZ$ with the discrete topology. It is well-known and not hard to check that the affine homeomorphisms on $\Omega$ have the form $T\omega = A\omega+b$ with $A \in (\bbZ/p\bbZ)^\times$ (the group of units modulo $p$) and $b \in \Omega$. Since $\Omega$ is totally disconnected, $b$ is in the path component of $0$ if and only if $b=0$, so we only consider $T_A:\omega \mapsto A\omega$. The dual group of $\Omega$ is also isomorphic to $\Omega$ via the identification $m \in \Omega \leftrightarrow \chi_m \in \widehat{\Omega}$ where $$\chi_m(\omega)  =  m \omega/p.$$ With this identification, one can check that $\widehat{A}\chi_m = \chi_{Am}$ by the following direct calculation:
 \begin{equation*}
 [\widehat A \chi_m](\omega) = \chi_m(A\omega) = m(A\omega)/p = (Am)\omega/p = \chi_{Am}(\omega).
 \end{equation*}
  In particular, writing $K = \ker(\widehat{A}-I)$, we have
\[K = \set{\chi_m : Am=m \ \mathrm{mod} \ p\bbZ}.\]
For the automorphism $T_A:\omega  \mapsto A\omega$ on $\Omega = \bbZ/p\bbZ$ with $A$ a unit modulo $p$, let us denote the right-hand side of \eqref{eq:broadSchwFails} by $\mathfrak{G}(p,A)$. Choosing $\mu$ to be normalized counting measure on $\{1,2\}$, we see that $\frA(\bbZ/3\bbZ,T_2,\mu) = \frac{1}{2}\bbZ \neq \bbZ = \mathfrak{G}(3,2)$.
\end{proof}
\end{appendix}

\bibliographystyle{abbrv}
\bibliography{ref}
\end{document}